\documentclass{article}

\usepackage{microtype}
\usepackage{graphicx}
\usepackage{subcaption}
\usepackage{booktabs} % for professional tables

\usepackage{hyperref}

 \usepackage[accepted]{icml2026}

\usepackage{amsmath}
\usepackage{amssymb}
\usepackage{mathtools}
\usepackage{amsthm}
\usepackage{thmtools}
\usepackage[capitalize,noabbrev]{cleveref}

 \theoremstyle{plain}
\theoremstyle{plain}
\declaretheorem{proposition}
\theoremstyle{definition}
\declaretheorem{assumption}
\declaretheorem{definition}
\declaretheorem{example}
\theoremstyle{remark}
\declaretheorem[numbered=no]{remark}

\renewcommand{\thmcontinues}[1]{%
  \ifcsname hyperref\endcsname
  \hyperref[#1]{continued}\else
  continued\fi
}

\theoremstyle{plain}
\newtheorem{manualtheoreminner}{Assumption}
\newenvironment{manualtheorem}[1]{%
  \IfBlankTF{#1}
    {\renewcommand{\themanualtheoreminner}{\unskip}}
    {\renewcommand\themanualtheoreminner{#1}}%
  \manualtheoreminner
}{\endmanualtheoreminner}

\usepackage[textsize=tiny]{todonotes}

\icmltitlerunning{Persuasive Privacy}

\newcommand{\xtrue}{x}

\newcommand{\cdotmid}{\;\cdot \mid}
\newcommand{\rmd}{\mathrm{d}}
\newcommand{\stat}{T}

\DeclareMathOperator*{\arginf}{arg\,inf}
\renewcommand{\Pr}{\mathbb{P}}
\newcommand{\priv}{S}

\newcommand{\PrMx}{\Pr_{x}}
\newcommand{\ExMx}{\mathbb{E}_{x}}
\newcommand{\xset}{\mathsf{X}}

\newcommand{\dset}{\mathcal{D}}
\newcommand{\Mext}{M_{\mathrm{Ext}}}
\newcommand{\dMult}{d_{\mathrm{Mult}}}

\renewcommand{\epsilon}{\varepsilon}

\begin{document}

\twocolumn[
  \icmltitle{Persuasive Privacy}

  % It is OKAY to include author information, even for blind submissions: the
  % style file will automatically remove it for you unless you've provided
  % the [accepted] option to the icml2026 package.

  % List of affiliations: The first argument should be a (short) identifier you
  % will use later to specify author affiliations Academic affiliations
  % should list Department, University, City, Region, Country Industry
  % affiliations should list Company, City, Region, Country

  % You can specify symbols, otherwise they are numbered in order. Ideally, you
  % should not use this facility. Affiliations will be numbered in order of
  % appearance and this is the preferred way.
  \icmlsetsymbol{equal}{*}

  \begin{icmlauthorlist}
    \icmlauthor{Joshua J Bon}{adelaide}
    \icmlauthor{James Bailie}{chalmers}
    \icmlauthor{Judith Rousseau}{dauphine}
    \icmlauthor{Christian P Robert}{dauphine,warwick}
  \end{icmlauthorlist}

  \icmlaffiliation{adelaide}{School of Mathematical Sciences, Adelaide University, Australia}
  \icmlaffiliation{chalmers}{Department of Computer Science and Engineering, Chalmers University of Technology and University of Gothenburg, Sweden}
  \icmlaffiliation{dauphine}{CEREMADE, Université Paris--Dauphine PSL, France}
  \icmlaffiliation{warwick}{Department of Statistics, University of Warwick, UK}

  \icmlcorrespondingauthor{Joshua J Bon}{joshuajbon@gmail.com}

  % You may provide any keywords that you find helpful for describing your
  % paper; these are used to populate the "keywords" metadata in the PDF but
  % will not be shown in the document
  \icmlkeywords{Privacy, Scoring rules, Bayesian Persuasion, Asymmetric information, Robust games}

  \vskip 0.3in
]

% this must go after the closing bracket ] following \twocolumn[ ...

% This command actually creates the footnote in the first column listing the
% affiliations and the copyright notice. The command takes one argument, which
% is text to display at the start of the footnote. The \icmlEqualContribution
% command is standard text for equal contribution. Remove it (just {}) if you
% do not need this facility.

% Use ONE of the following lines. DO NOT remove the command.
% If you have no special notice, KEEP empty braces:
\printAffiliationsAndNotice{}  % no special notice (required even if empty)
% Or, if applicable, use the standard equal contribution text:
% \printAffiliationsAndNotice{\icmlEqualContribution}

\begin{abstract}
   We propose a novel framework for measuring privacy from a Bayesian game-theoretic perspective. This framework enables the creation of new, purpose-driven privacy definitions that are rigorously justified, while also allowing for the assessment of existing privacy guarantees through game theory. We show that pure and probabilistic differential privacy are special cases of our framework, and provide new interpretations of the post-processing inequality in this setting. Further, we demonstrate that privacy guarantees can be established for deterministic algorithms, which are overlooked by current privacy standards.
\end{abstract}

\section{Introduction}
The scientific and economic value of data grows alongside technological advances. New hardware and software developments enable, but often require, larger and more complex datasets to function effectively. As the importance of input data to these systems becomes increasingly recognized, so too does the loss of privacy for data providers. In this context, data privacy emerges as a critical issue for fields such as statistics and machine learning, as well as for scientific and industrial endeavours that rely on sensitive data.

In the last two decades, differential privacy \citep[DP,][]{dwork2006puredp, dwork2006approxdp, dwork2006differential} and its variants \citep[see][]{desfontaines2020sok} have become the de facto standard for data privacy. However, DP continues to face conceptual and practical challenges, including difficulties in interpreting and communicating its parameters \citep{cummingsCenteringPolicyPractice2023}; gaps between its implementation and legal or social notions of privacy \citep{seemanPrivacyUtilityDifferential2023}; and the large, potentially vacuous, privacy loss budgets often seen in its real-world deployments \citep{dworkDifferentialPrivacyPractice2019, schneiderWhyDataAnonymization2025a}. With these concerns in mind, we develop a framework to (i) generate privacy definitions that are fit for purpose yet rigorously justified; (ii) assess existing privacy guarantees using a Bayesian game-theoretic approach; and (iii) evaluate the privacy of deterministic algorithms. This latter capability is incompatible with DP and most of its variants (see Appendix~\ref{app:Pufferfish}), and is particularly important to the problem of assessing the privacy leakage of invariant statistics---deterministic summaries of the sensitive data which are integral to many types of statistical dissemination, including the US Decennial Census \citep{abowd2020CensusDisclosure2022, bailieRefreshmentStirredNot2025d}.

Our new framework for privacy is developed rigorously and systematically using game theory. We make explicit assumptions to establish our framework and discuss their merits and necessity. Our approach allows for a semantics-first understanding of data privacy, where each assumption can be tested against real-world considerations, and provides privacy definitions which are easy to interpret, communicate and tailor because the framework is constructed from an agent-based game.  
In contrast, semantic interpretations of DP have been constructed post-hoc \citep{kasiviswanathanSemanticsDifferentialPrivacy2014, wassermanStatisticalFrameworkDifferential2010} 
and can be difficult to understand \citep{nanayakkaraWhatAreChances2023a, cummingsNeedBetterDescription2021}. 
We address this limitation whilst facilitating future work to relax or modify our assumptions when they are not well suited to a particular use case.

We show that privacy definitions generated by our framework satisfy a composition property and a weaker version of the standard post-processing inequality. We discuss an agent-based interpretation of the standard post-processing inequality, and show how it is trivial to ensure that it holds in practice. We show how our framework encompasses pure $\varepsilon$-DP and probabilistic $(\varepsilon, \delta)$-DP, whilst a minor change establishes a formal connection to R\'{e}nyi DP \citep{mironov2017renyi} and $f$-divergence privacy \citep{barberPrivacyStatisticalRisk2014, bartheDifferentialPrivacyComposition2013}. Further comparisons to existing privacy definitions, including quantitative information flow \citep{alvimScienceQuantitativeInformation2020} and pufferfish privacy \citep{kiferPufferfishFrameworkMathematical2014}, are discussed in Appendix~\ref{app:relation}.

\subsection{Related Work}

Statistical data privacy \citep{slavkovicStatisticalDataPrivacy2023} has an extensive literature which stretches back to at least the 1970s \citep{daleniusMethodologyStatisticalDisclosure1977} and includes both modern, formal theories (e.g., DP) as well as traditional statistical disclosure control \citep[SDC,][]{hundepoolStatisticalDisclosureControl2012, willenborgElementsStatisticalDisclosure2001}. Although the problem of data privacy is typically not expressed in terms of game theory, this field nevertheless shares with our work the concepts of Sender (the agency releasing statistics) and Receiver (the adversary seeking to exploit the published data). The concern in statistical data privacy is to limit Receiver's ability to learn about individual data points from the published statistics \citep{duncanDisclosurelimitedDataDissemination1986, dworkDifficultiesDisclosurePrevention2010}; we generalise this with a ``privacy function'' which represents Sender's loss (in an abstract sense) due to Receiver's decision. This is similar to \citet{bunEnforcingDemographicCoherence2025} which frames privacy in terms of Receiver's ability to inflict harm through the use of the published data.

As with our work, Bayesian formulations of Receiver are common in both DP \citep[see e.g.,][]{dwork2006puredp, kiferPufferfishFrameworkMathematical2014, kiferNoFreeLunch2011, kasiviswanathanSemanticsDifferentialPrivacy2014, kiferBayesianFrequentistSemantics2022} and traditional SDC \citep[see e.g.,][]{fienbergBayesianApproachData1997, dobraAssessingRiskDisclosure2003}.
However, we are, to the best of our knowledge, the first to consider Sender as a player in the game, similar to the set-up of Bayesian Persuasion \citep{kamenica2011bayesian}. 

Several lines of work connect DP and game theory, ranging from game-theoretic DP parameter settings \citep{kohliEpsilonVotingMechanism2018, hsuDifferentialPrivacyEconomic2014} to the use of DP for mechanism design \citep{mcsherryMechanismDesignDifferential2007, nissimApproximatelyOptimalMechanism2012, paiPrivacyMechanismDesign2013} as well as the pricing personal data with DP \citep{ghoshSellingPrivacyAuction2015, dandekarPrivacyAuctionsRecommender2014, rothConductingTruthfulSurveys2012, fleischerApproximatelyOptimalAuctions2012, ligettTakeItLeave2012, liTheoryPricingPrivate2017}. There is also literature extending existing games, such as Bayesian Persuasion, to include DP constraints \citep{panDifferentiallyPrivateBayesian2025}. In contrast, we provide a fully game-theoretic foundation and justification of DP, amongst other privacy definitions. 

\subsection{Notation}

We use the terms \textit{privacy definition} and \textit{guarantee} interchangeably, and refer to the set of all data release mechanisms satisfying a certain privacy definition as a \textit{privacy class}. A given privacy definition will generate a privacy class. For example, the set of all mechanisms satisfying $\varepsilon$-DP is a privacy class. The sensitive dataset is denoted by~$x$. We consider~$\xset$ to be some universe of possible datasets, such that $x \in \xset$. We use $z$ as a dummy variable in place of~$x$ if distinction or generality is required.

Probability distributions $P, Q:\mathcal{X} \rightarrow [0,1]$ expressing uncertainty about the value of $x \in \xset$ are defined on a measurable space $(\mathsf{X},\mathcal{X})$, and a family of such distributions is denoted by $\mathcal P$. For example, $P = \mathcal{N}(\mu,\Sigma)$ may express the uncertainty held by a player about the data $x \in \mathbb{R}^n$.  Expectations of $f:\mathsf{X} \rightarrow\mathbb{R}$ under $P$ are written as $\mathbb{E}_{X\sim P}[f(X)]$.

We denote Markov kernels by $M:(\mathsf{Z},\mathcal{T}) \rightarrow [0,1]$ for input and output measurable spaces\footnote{When multiple kernels are required we will also denote measurable spaces by $(\mathsf{T}_k,\mathcal{T}_k)$ for positive integers $k$.} $(\mathsf{Z},\mathcal{Z})$ and $(\mathsf{T},\mathcal{T})$, respectively. That is, $M(z,\cdot)$ is a probability distribution on $(\mathsf T, \mathcal T)$ for a given $z \in \mathsf{Z}$. We use Markov kernels to represent data release mechanisms. For example, releasing $\bar{x}$ with additive Gaussian noise can be written as $M(x,\cdot) = \mathcal{N}(\bar{x},\sigma^2)$. Deterministic mechanisms can be expressed as $M(x,\cdot) = \delta_{f(x)}$ for a function $f$, where~$\delta_{t}$ is a degenerate probability distribution with point mass at~$t$. The identity kernel will be denoted by $\mathrm{Id}$.
The probability of an event $E \in \mathcal{T}$ under a mechanism $M(x,\cdot)$ is denoted by $\mathbb{P}_x(E)$, conditional on $x \in \mathsf{X}$.
The composition (resp. tensor product) of two Markov kernels, say $M$ and $K$, is denoted by $MK$ (resp. $M \otimes K$). For $M:(\mathsf{Z},\mathcal{T}_1) \rightarrow [0,1]$ and $K:(\mathsf{Z}\times\mathsf{T}_1,\mathcal{T}_2)\rightarrow [0,1]$, the composition and tensor product are defined as
\begin{align*}
    MK(z,\cdot) &= \int M(z,\rmd t_1)K((z,t_1),\cdot)~\text{and}\\
    (M \otimes K)(z,\rmd(t_1,t_2)) &= M(z,\rmd t_1)K((z,t_1),\rmd t_2),
\end{align*}
respectively, for fixed $z \in \mathsf{Z}$.
We say that a Markov kernel $M$, defined by $M((x,y), \cdot)$ for all $(x,y) \in \mathsf{X}\times\mathsf{Y}$, is independent of $x$ if $M((x_1,y), \cdot)=M((x_2,y), \cdot)$ for all $x_1,x_2\in \mathsf{X}$.

We denote regular conditional probability distributions \citep[see e.g.,][]{durrettProbabilityTheoryExamples2019} using conditioning notation. For example, if $P$ is a prior distribution for $x$ then $P(\cdotmid T)$ denotes the posterior distribution, after observing $T \sim M(x,\cdot)$.

We make use of proper scoring rules \citep[see][for an overview]{gneiting2007strictly}. Let $S:(\mathcal{P},\mathsf{X}) \rightarrow \mathbb{R} \cup \{-\infty,\infty\}$ be a (negatively-orientated) scoring rule. Scoring rules measure the success of predictive distribution $P$ to estimate the truth $x$ by $S(P,x)$, where $S(P,x) < S(Q,x)$ if $P$ outperforms~$Q$ (according to $S$). Later, we show how \textit{proper} scoring rules arise naturally in our framework. Let $S(P,Q) = \mathbb{E}_{X\sim Q}[S(P,X)]$ be the \emph{expected score} under $Q$. 
\begin{definition}
     A scoring rule is proper if $S(Q,Q) \leq S(P,Q)$ for all $P,Q \in \mathcal{P}$.  A strictly proper scoring rule is proper and $S(Q,Q) = S(P,Q)$ if and only if $P=Q$.
\end{definition}

\section{Privacy as Persuasion}
We propose a two-player Stackelberg game, involving Sender and Receiver, to construct a new class of privacy definitions. Sender is the custodian of the sensitive data $\xtrue \in \xset$ and designs a mechanism $M$ to release useful information derived from the data. Sender shares the (potentially stochastic) output of the mechanism with Receiver, who then takes an action (or makes a decision) that will affect the privacy of Sender.

Our game-theoretic foundation of the privacy framework is closely related to Bayesian Persuasion \citep{kamenica2011bayesian}. Ours contrasts this work in three main ways. Firstly, we model an information asymmetry between Sender and Receiver. That is, Sender knows the true value of the sensitive data, whilst Receiver has uncertainty about the data expressed as a prior distribution. Secondly, the utility functions of Sender and Receiver are related. Lastly, we assume that Sender assesses decisions (i.e., the mechanism to choose) in a robust manner by assessing worst-case, rather than expected, outcomes for privacy.  For further discussion of Bayesian Persuasion, also known as information design, see \citet{kamenica2019bayesian}.

\subsection{Preliminaries}

Sender assesses the level of privacy based on the context and goals related to sharing information derived from the data. They consider adversarial decisions (actions) made by Receiver in a decision space $\dset$ which determines privacy relative to the value of the data.

\subsubsection{Privacy Functions}
We define Sender's privacy relative to decision $d \in \dset$ and data value $x \in \xset$ as a privacy function.
\begin{definition}[Privacy function]
    A privacy function  ${\rho:(\dset, \xset) \rightarrow \mathbb{R}}$ represents the preferences of Sender toward Receiver's possible decisions $d_i \in \dset$. For $x \in \xset$, if $d_1$ is preferred to $d_2$, then $\rho(d_1,x) > \rho(d_2,x)$.
\end{definition}
Privacy is positively-orientated (for Sender) and hence higher values of  $\rho(d,x)$ indicate higher privacy under value~$x$, for an adversarial decision~$d$. Sender may assess privacy with one or more privacy functions, an extension we consider in Section~\ref{sec:def}. We focus on the case of one privacy function for ease of exposition in this section. 
\begin{example}\label{ex:interval}
    Let $\dset = \{[a,b]: a, b \in \mathbb R, a \leq b\}$ and $\mathsf{X}=\mathbb{R}$. Given $s > 0$, the interval privacy function is defined as
    \begin{equation*}
        \rho(d,x) = \begin{cases}
            0 & \text{if}~ x\in d ~\text{and}~\vert d \vert \leq s,\\
            1 & \text{otherwise}.
        \end{cases}
    \end{equation*}
\end{example}
In this example, privacy is a binary outcome. Privacy is achieved by Sender if the true data $x$ is not contained in Receiver's decision, an interval, or if the interval is sufficiently large.  
\begin{example}\label{ex:neglogprobprivfunc}
    Let $\dset$ be the set of probability density functions on $\mathsf{X}=\mathbb{R}$ with respect to some given measure $\mu$. For $d \in \dset$ and $x \in \xset$, the negative log-probability privacy function is $\rho(d,x)  = -\log d(x)$.
\end{example}
In this example, Receiver's decision is a density function. If this density function has a relatively high value at the true data $x$, then Sender has less privacy. 

Sender has no explicit control over Receiver's decision~$d$, and  Receiver's agenda is unknown to Sender. In Section~\ref{sec:receiver}, we use assumptions to derive Receiver's optimal decision, so that Sender can assess privacy with such a response from Receiver.
Before this, we comment on the need for transparent privacy guarantees.

\subsubsection{Transparent Guarantees}
Transparency is important in the context of privacy as Sender is typically required to convince external parties (e.g., regulators or the public) that the mechanism in question satisfies a privacy guarantee. As such, Sender will share the mechanism $M$ and their chosen privacy definition so that the privacy status of $M$ can be verified. Sharing the privacy definition is equivalent to sharing $\mathfrak{C}$, the privacy class generated by the definition (for which $M \in \mathfrak{C}$). We describe adherence to the transparency principle by the following assumption.
\begin{assumption}\label{ass:transparency}
    Sender shares the mechanism $M$ and privacy class $\mathfrak{C}$, for which $M \in \mathfrak{C}$, with Receiver. Further, the definitions of~$M$ and~$\mathfrak{C}$ do not depend on the data.
\end{assumption}
The condition that the data does not determine $M$ ensures no information about $x$ is leaked to Receiver when Sender shares the definition of~$M$. For example, consider the constant mechanism $M_x$, defined by $M_x(z,\cdot) = \delta_x$ for all $z \in \mathsf{X}$, where $x$ is the true data. Sharing 
$M_x$ completely reveals the true data $x$. Assumption~\ref{ass:transparency} explicitly prohibits such dependence of $M$ on the data, so that only dependence through the first argument of $M$ need be considered when constructing privacy definitions \citep{bailieRefreshmentStirredNot2024c}. Further, leakage can also occur if the privacy class $\mathfrak{C}$ depends on the data (see the remark in the next section).

\subsection{Receiver's Decision}\label{sec:receiver}
To derive Receiver's optimal decision, or best response, we make the following assumption.
\begin{assumption}\label{ass:bayes}
Receiver makes Bayesian decisions, i.e., they are \emph{Bayes rational} \citep{aumannCorrelatedEquilibriumExpression1987}.
\end{assumption}
Receiver's uncertainty about the sensitive data is represented by their \emph{belief}, a distribution over $x$. In particular,
Receiver holds a prior on the values of the data, a \textit{data-prior}~$Q\in\mathcal{P}$, and knows the mechanism $M$ generating the output (by Assumption~\ref{ass:transparency}). Receiver updates their belief after observing the realised output $T$ from~$M(x,\cdot)$. With this information Receiver constructs their \textit{data-posterior},
\begin{equation}\label{eq:datapost}
    Q_{\stat} = Q(\cdotmid \stat),
\end{equation}
the Bayes update after observing $\stat \sim M(\xtrue,\cdot)$.  Note that the effect of the chosen mechanism $M$ is implicit in $Q_T$. For instance, if, for all $x \in \xset$, the probabilities $M(x,\cdot)$ have a common dominating measure $\mu$ and densities $m(x,\cdot)$ with respect to $\mu$, then $ Q_{\stat}(\rmd x)  \propto Q(\rmd x)m(x,\stat)$ where $m(\cdot,T)$ is the likelihood function conditional on $T$.
 Before continuing to Receiver's optimal decision, we make the following remark about transparency. 

\begin{remark}
Were the privacy class dependent on the data then knowledge of such a class, say $\mathfrak{C}_x$, would restrict the support of the data-posterior to $\xset^\prime = \{x \in \xset: M\in \mathfrak{C}_x\}$. Therefore, excluding this possibility by Assumption~\ref{ass:transparency} ensures that the data-posterior is specified as in \eqref{eq:datapost}, and is not restricted\footnote{Conditioning the data-posterior on some additional information is not an inherent problem. Rather, $\xset^\prime = \{x \in \xset: M\in \mathfrak{C}_x\}$ depends on the privacy definition generating $\mathfrak{C}_x$. The same privacy definition we will come to define using the data-posterior, now conditioned on $\xset^\prime$ depending on $\mathfrak{C}_x$. Allowing this possibility would result in a self-referential definition of privacy, hence the condition in Assumption~\ref{ass:transparency}.} to the set $\xset^\prime$.
\end{remark}

If Receiver has a loss  function $\ell: (\dset, \xset) \rightarrow \mathbb{R}$, then by Assumption~\ref{ass:bayes} Receiver makes Bayes decisions according to $\ell$ and their belief $P \in \mathcal{P}$ about the data. Their optimal decision under $P$ and $\ell$ is therefore
$d^{P}_\ell \in \arginf_{d \in \dset} \mathbb{E}_{X \sim P}[ \ell(d,X)]$,
noting that the infimum may not be unique. Specifically, before observing the output $T$, Receiver's belief is their data-prior~$Q$, whilst afterward it is their data-posterior $Q_{\stat}$.

Interestingly, the worst-case data-averaged loss function (from Sender's point of view) for privacy is $\ell = \rho$, as shown in the proposition below. 
\begin{proposition}\label{prop:datavgworst}
 If $\ell = \rho$, then Sender attains the worst-case data-averaged privacy value. That is, $\mathbb{E}_{X \sim P}[ \rho(d^P_\rho,X)] \leq \mathbb{E}_{X \sim P}[ \rho(d^P_\ell,X)]$ for any $\ell: (\dset, \xset) \rightarrow \mathbb{R}$.
 
\end{proposition}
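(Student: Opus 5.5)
The claim follows directly from the definition of $d^P_\rho$ as a Bayes decision. By Assumption~\ref{ass:bayes}, when Receiver's loss is $\rho$ their decision is
\[
d^P_\rho \in \arginf_{d \in \dset} \mathbb{E}_{X \sim P}[\rho(d,X)].
\]
So $d^P_\rho$ attains the infimum over all of $\dset$ of the map $d \mapsto \mathbb{E}_{X\sim P}[\rho(d,X)]$. This is exactly Sender's data-averaged privacy under belief $P$.

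The steps are as follows.

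First, fix any loss $\ell:(\dset,\xset)\rightarrow\mathbb{R}$ and any Bayes decision $d^P_\ell \in \arginf_{d\in\dset}\mathbb{E}_{X\sim P}[\ell(d,X)]$. The only property of $d^P_\ell$ we use is that it is an element of $\dset$. Its optimality with respect to $\ell$ plays no role.

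Second, since $d^P_\rho$ minimises $\mathbb{E}_{X\sim P}[\rho(d,X)]$ over all $d\in\dset$, we may specialise to $d=d^P_\ell$. This gives $\mathbb{E}_{X\sim P}[\rho(d^P_\rho,X)] \le \mathbb{E}_{X\sim P}[\rho(d^P_\ell,X)]$, which is the stated inequality.

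Third, because privacy is positively oriented for Sender, the smallest achievable data-averaged value of $\rho$ is the worst case for Sender. Hence $\ell=\rho$ realises that worst case, and no other loss can do worse.

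The main obstacle is technical rather than conceptual: we need $d^P_\rho$ to exist and the expectations to be well defined. I would state as standing conditions, implicit in the notation $d^P_\ell \in \arginf$, that the infimum is attained and that $\rho(d,\cdot)$ is $P$-integrable, or at least quasi-integrable with values in $\mathbb{R}\cup\{\pm\infty\}$, for the relevant $d$.

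If the infimum is not attained, the natural fallback is to interpret the left-hand side as $\inf_{d}\mathbb{E}_{X\sim P}[\rho(d,X)]$. The inequality then holds trivially, since the infimum is no larger than the value at $d=d^P_\ell$.

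Non-uniqueness of the minimiser causes no difficulty. Every element of the arginf attains the same infimal value, so the inequality holds for any choice of $d^P_\rho$ and any choice of $d^P_\ell$.
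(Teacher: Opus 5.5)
Your proposal is correct and follows essentially the same argument as the paper: both use only that $d^P_\rho$ minimises $d \mapsto \mathbb{E}_{X\sim P}[\rho(d,X)]$ over all of $\dset$ and that $d^P_\ell \in \dset$. The paper likewise assumes the $\arginf$ sets are non-empty, and your additional remarks on integrability, non-attained infima and non-uniqueness are harmless refinements.
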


All proofs are deferred to Appendix~\ref{app:proofs}.
In light of Proposition \ref{prop:datavgworst}, we now restrict our attention to the case where Receiver has loss function $\ell = \rho$, where $\rho$ is Sender's privacy function. 
\begin{assumption}\label{ass:loss}
    Receiver's loss function satisfies $\ell(d,x) = \rho(d,x)$ for all $d \in \dset$ and $x\in\xset$.
\end{assumption}
For simplicity's sake, we denote Receiver's decision under Assumption~\ref{ass:loss} as 
\begin{equation}\label{eq:advoptdec}
    d^{P} \in \arginf_{d \in \dset} \mathbb{E}_{X \sim P}[ \rho(d,X)].
\end{equation}

\subsection{Sender's Attained Privacy}

Sender's attained privacy value under Receiver's optimal decision is $\rho(d^{P},\xtrue)$, where $P$ is Receiver's belief. After Receiver has observed the output, Sender's privacy value is $\rho(d^{Q_T},\xtrue)$, which will depend on the mechanism chosen. As such, Sender wishes to choose a mechanism which persuades Receiver to make decisions which have a limited impact on privacy. 

The attained privacy value also has an interpretation as a proper scoring rule.
\begin{proposition}\label{prop:scoringrule}
    Let $\priv:(\mathcal{P},\xset)\rightarrow \mathbb{R}$ be defined as 
\begin{equation*}
    \priv(P,x) = \rho(d^{P},x).
\end{equation*}
Then $\priv$ is a negatively-orientated proper scoring rule.
\end{proposition}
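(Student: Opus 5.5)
The plan is to unwind the definitions directly; this is the standard argument that Bayes acts induce proper scoring rules. First, orientation. The map $\priv(P,x)=\rho(d^P,x)$ sends a belief $P$ to the privacy Sender retains when Receiver acts on $P$. Lower values mean that $P$ led Receiver to a more damaging decision at the truth $x$, i.e.\ $P$ ``predicted'' $x$ better. This is exactly the negative orientation fixed in the Notation section, so it only remains to verify propriety: $\priv(Q,Q)\le \priv(P,Q)$ for all $P,Q\in\mathcal{P}$.

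To verify propriety, fix $P,Q\in\mathcal{P}$ and write out the expected scores:
\begin{equation*}
\priv(P,Q)=\mathbb{E}_{X\sim Q}[\rho(d^P,X)],\qquad \priv(Q,Q)=\mathbb{E}_{X\sim Q}[\rho(d^Q,X)].
\end{equation*}
By \eqref{eq:advoptdec}, $d^Q$ attains the infimum of $d\mapsto \mathbb{E}_{X\sim Q}[\rho(d,X)]$ over $\dset$. Since $d^P\in\dset$ is just one particular decision, we get $\mathbb{E}_{X\sim Q}[\rho(d^Q,X)]\le \mathbb{E}_{X\sim Q}[\rho(d^P,X)]$, which is the required inequality. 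This is the same one-line comparison that underlies Proposition~\ref{prop:datavgworst}: there the loss $\ell$ varies while $\rho$ is held fixed, whereas here the belief varies. One could even derive the inequality from that proposition by taking $\ell$ to be any loss for which $d^P$ is a Bayes act under $Q$, but the direct argument is cleaner.

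The main obstacle is well-definedness rather than the inequality itself. Three points need care.
\begin{itemize}
\item \emph{Selection of the Bayes act.} The argmin in \eqref{eq:advoptdec} may be non-unique, so I will fix a selection $P\mapsto d^P$ once and for all. The inequality holds for any such selection, because it uses only optimality of $d^Q$ against the arbitrary competitor $d^P$.
\item \emph{Existence of the minimiser.} The argmin may be empty. I will state explicitly the standing assumption, implicit in \eqref{eq:advoptdec}, that a minimiser exists for every $P\in\mathcal{P}$.
\item \emph{Measurability and integrability.} We need $x\mapsto\rho(d,x)$ to be measurable and the expectations to be well defined, possibly taking the value $+\infty$, consistent with allowing scores in $\mathbb{R}\cup\{-\infty,\infty\}$. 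I will add the mild assumption that $\rho(d,\cdot)$ is quasi-integrable under each $Q\in\mathcal{P}$.
\end{itemize}

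Finally, I will remark that strict propriety does not follow in general. If two beliefs share a Bayes act, then $\priv(P,Q)=\priv(Q,Q)$ with $P\neq Q$; for example, the interval privacy function in Example~\ref{ex:interval} gives such ties. So the proposition claims only properness, which is what the argument above delivers.
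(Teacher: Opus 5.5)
Your proposal is correct and is essentially the argument the paper relies on. The paper gives no proof of its own but defers to Section 3.4 of Gr\"unwald and Dawid (2004), where properness follows exactly from optimality of the Bayes act $d^Q$ against the competitor $d^P$ under $Q$; your caveats (a fixed selection of Bayes acts, existence of the minimiser, quasi-integrability) match the ``mild technical conditions'' the paper mentions, and your remark that strict propriety can fail is also correct.
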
 Proposition~\ref{prop:scoringrule} is proved in \citet[Section 3.4,][]{grunwald2004game} under mild technical conditions \citep[see also,][]{dawid2005geometry,dawid2007geometry}. We call $S$ a \textit{privacy score}, which measures how well Receiver's belief (represented by the distribution $P$) predicts the true value of the data $x$.

Privacy scores can be derived from the privacy functions stated in Examples~\ref{ex:interval} and~\ref{ex:neglogprobprivfunc} as follows. 
\begin{example}[continues=ex:interval]
    The interval privacy score with length $s > 0$ is
\begin{equation*}
        S(P,x) = \begin{cases}
            0 & \text{if}~ x\in [m^P-\frac{s}{2},m^P + \frac{s}{2}]\\
            1 & \text{otherwise}.
        \end{cases}
\end{equation*}
where $m^P = \arg\max_{m \in \mathbb{R}} P([m-\frac{s}{2},m+\frac{s}{2}])$, assuming the maximiser is unique for all $P \in \mathcal{P}$. When $P$ is a symmetric unimodal distribution, $m^P$ will be its median.
\end{example}
In this example, Receiver's optimal decision is the interval of length $s$ with maximum probability under $P$. If this interval does not contain $x$ then Sender has retained privacy. 
\begin{example}[continues=ex:neglogprobprivfunc]
    The negative log-probability privacy score is
    \begin{equation*}
        S(P,x) = - \log p(x),
    \end{equation*}
    where $p$ is the density of $P$ with respect to a given measure~$\mu$, in the case where $\mathcal P$ consists of distributions which are absolutely continuous with respect to~$\mu$.
\end{example}
This example leads to the well-known log-probability score (which is related to the Kullback–Leibler divergence). Appendix~\ref{app:discretelogprob} contains a technical note defining the negative log-probability score more generally, required for Section~\ref{sec:dp}.

Proper scoring rules can be constructed from loss functions under Bayes acts (decisions) with mild conditions \citep{grunwald2004game}.
As such, working with (proper) privacy scores is equivalent to the privacy-function approach discussed thus far. That is, the specification of $\rho$ and $\dset$ generates a privacy score $S$, whilst a privacy score $S$ implies the existence of an equivalent\footnote{We prove the latter statement in Appendix~\ref{app:proper} for completeness. Proof of the former statement is in \citet{grunwald2004game} with further discussion in \citet{brehmer2020properization}.} $\rho$ and $\dset$. 
This allows us to measure privacy using either privacy functions or proper scoring rules with our framework. Proper scoring rules\footnote{It is also the case that a scoring rule lacking propriety (i.e., not proper) can often be adjusted to gain propriety \citep{brehmer2020properization}.} are convenient and natural to consider, so we will focus on these for the remainder of the paper. 

\subsection{Sender's Decision}\label{sec:sender}
In this section, we specify how Sender distinguishes between mechanisms, based on Receiver's optimal decision. Our forthcoming assumptions govern Sender's approach to assessing privacy, but we note that the framework established thus far is also amenable to contexts which may necessitate an alternative approach.

We assume Sender assesses mechanisms based on their relative effect on privacy. That is, Sender chooses a mechanism based on its \textit{relative privacy score}
\begin{equation*}
    \Delta(Q,T,x) = S(Q,x) - S(Q_T,x).
\end{equation*}
The relative privacy score is negatively-orientated for Sender. That is, lower values of $\Delta(Q,T,x)$ represent greater privacy. Typically, $S(Q,x) > S(Q_T,x)$ as privacy decreases after information is released, and $\Delta(Q,T,x)$ will be positive, but given stochasticity in $T$, the relative privacy score can take negative values.
An alternative to a relative assessment is for Sender to assess the absolute privacy using $S(Q_T,x)$. However, this would not capture the change in Receiver's belief, which would lead to very strict assessments of privacy. As an extreme example, if Receiver already has full knowledge of the dataset, i.e., $Q=\delta_x$, then every mechanism would have equal absolute privacy.

The relative privacy score is stochastic since it depends on $T \sim M(x,\cdot)$. But Sender must assess privacy before $T$ is generated. The stochasticity can be accounted for in a number of ways, including by expectation or tail probability. We choose the latter to favour robustness in our definition of privacy, whilst Appendix~\ref{app:renyi} discusses the former choice in further detail.
\begin{assumption}\label{ass:tailprob} For a given data-prior $Q$ and dataset $x$, Sender considers a mechanism $M$ private only if
\begin{equation}\label{eq:tailprob}
    \PrMx\left[\Delta(Q,T,x) \leq \kappa \right] \geq 1 - \delta,
\end{equation}

for some maximum acceptable privacy loss $\kappa\geq0$ and small probability of failure $0 \leq \delta \ll 1$.
\end{assumption}
Assumption~\ref{ass:tailprob} dictates that Sender wishes to ensure privacy for all events except (possibly) those with low probability.\footnote{Considering the expectation of $\Delta(Q,T,x)$, rather than the tail probabilities, can recover R\'{e}nyi DP for example (see Appendix~\ref{app:renyi}).} From the game-theoretic perspective, \eqref{eq:tailprob} encodes a binary value function (relative privacy under Receiver's optimal decision) for Sender. Further discussion of the binary value function is given in Section~\ref{sec:game}.

So far, we have conditioned on the existence of a known data-prior $Q$. In most cases, it will be difficult for Sender to know Receiver's data-prior, even approximately.  For a robust definition of privacy we consider the worst case over a reasonable class of priors $\mathcal{Q}_x \subset \mathcal{P}$  that Receiver may hold. This class may depend on the data value $x$, since this allows us to construct classes where bounds on Receiver's adversarial strength are constant as $x$ varies. We will drop the subscript when $\mathcal Q_x$ does not depend on $x$. Sender's consideration of a set of data-priors is formalised as follows.
\begin{assumption}\label{ass:worstpriordataset}
    Sender considers $M$ private if \eqref{eq:tailprob} holds uniformly for all data-priors~$Q \in \mathcal{Q}_x$ and datasets $x \in \xset$. 
\end{assumption}

That \eqref{eq:tailprob} holds uniformly over all datasets $x \in \xset$ is sufficient to ensure that the privacy class (to be defined) does not depend on the data, as required by Assumption~\ref{ass:transparency}.

Assumptions~\ref{ass:tailprob} and \ref{ass:worstpriordataset} imply that Sender will assess the privacy of a mechanism $M$ by validating
\begin{equation}\label{eq:privasses}
    \inf_{x \in \xset}\inf_{Q \in \mathcal{Q}_x}\PrMx\left[\Delta(Q,T,x) \leq \kappa \right] \geq 1 - \delta.
\end{equation}
In particular, mechanisms satisfying \eqref{eq:privasses} are robust to (i) the value of the dataset, (ii) the data-prior held by Receiver, and (iii) the (possibly) stochastic outcome of the mechanism.

\subsection{Game Interpretations}\label{sec:game}
To provide a complete game-theoretic interpretation of our framework, we can take the following view. Suppose a third player, Nature, has a dataset $\xtrue \in \xset$, where $\xtrue$ is unknown to Sender and Receiver. Nature will reveal the dataset to Sender, who will then share information about $x$ to Receiver through a mechanism. Before this, Sender publicly commits to using a mechanism $M(x,\cdot)$ to share information with Receiver. Once $\xtrue$ is revealed, Sender shares the output of the mechanism with Receiver. 

In this scenario, if Sender wishes to be robust to the data value, data-prior, and randomness of $M$, they can use \eqref{eq:privasses} to assess the mechanism. As such, the inclusion of Nature provides a complete game-theoretic interpretation for protecting all $x\in \xset$ (in Assumption~\ref{ass:worstpriordataset}), whilst Assumption~\ref{ass:tailprob} and consideration of the worst $Q\in\mathcal{Q}_x$ (in Assumption~\ref{ass:worstpriordataset}) can be attributed to Sender's choice to be robust. In this game, if Sender chooses the mechanism from some set $\mathcal{M}$, then their objective function will be
\begin{equation}\label{eq:obj}
    v(M) = 1\{M \in \mathfrak{C}\},
\end{equation}
where $\mathfrak{C}$ is the privacy class defined by
\eqref{eq:privasses}. Therefore, private mechanisms attain unit utility and non-private mechanisms attain zero utility.

Typically, \eqref{eq:obj} will not have a unique maximum, and the resulting set of ``optimal'' private mechanisms will be indistinguishable.\footnote{A natural tie-breaking strategy is to introduce a secondary utility function which selects the ``best'' private mechanism, in some manner. For example, a function measuring statistical efficiency could be used to select a mechanism from $\mathfrak{C}$.}  This characterisation aligns with contemporary privacy definitions, which typically assess if privacy is attained by a given mechanism or not.

In contrast to this game-theoretic interpretation, we make the following comments on real-world assessment of data privacy. Firstly, when Sender is the custodian of the data, we believe that the transparency justification for protecting all $x \in \xset$ is more compelling than the use of Nature as a third player. In other words, protecting all $x \in \xset$ is better motivated as a sufficient condition for Assumption~\ref{ass:transparency}. Secondly, the privacy of a mechanism is typically assessed individually, rather than as a set or utility function, i.e., comparing \eqref{eq:privasses} to \eqref{eq:obj}. Indeed, this is how most, if not all, privacy guarantees are introduced and discussed. Hence, without loss of generality, we focus on privacy assessment of one mechanism for the remainder of the paper.

\section{Persuasive Privacy}\label{sec:def}
We extend our definition of privacy to include multiple privacy scores (equivalently privacy functions) if required. We denote the (non-empty) set of privacy scores under consideration by $\mathcal{S}$. The class of Receiver data-priors is $\mathcal{Q}_x \subset \mathcal{P}$. The privacy parameters are $\kappa >0$ which is the maximal allowable change in privacy, and $0\leq \delta\ll 1$, representing a small probability of failing to meet this maximal change.
\begin{definition}[Persuasive Privacy]\label{def:pp}
    A mechanism $M$ is said to be $(\mathcal{S},\mathcal{Q}_x,\kappa,\delta)$-PP if \begin{equation}\label{eq:privacyobj}
     \inf_{S \in \mathcal{S}}\inf_{x \in \xset}\inf_{Q \in \mathcal{Q}_x}\PrMx\left[\Delta_S(Q,T,x) \leq \kappa \right] \geq 1 - \delta,
\end{equation}
where $\Delta_S(Q,T,x) = S(Q,x) - S(Q_T,x)$.
\end{definition}
When $\mathcal{S}$ is a singleton, say $\mathcal{S} = \{S\}$, we use $(S,\mathcal{Q}_x,\kappa,\delta)$ as shorthand for $(\{S\},\mathcal{Q}_x,\kappa,\delta)$.
Though the dependence is not explicitly stated, the choice of mechanism defines the probability $\mathbb{P}_x(\cdot)$ and affects the construction of the data-posterior $Q_T$ in $\Delta_S(Q,T,x)$. 
We now consider some properties of $(\mathcal{S},\mathcal{Q}_x,\kappa,\delta)$-PP mechanisms.

\subsection{Composition}
We can attain a composition rule for persuasive privacy when the family of posterior distributions $\mathcal{Q}_x$ is closed under Bayes updating by the mechanisms considered. First we define this conjugacy condition, before stating the composition property.
\begin{definition}[Conjugacy]
    A family of distributions $\mathcal{Q}_x$ is conjugate to a mechanism $M:(\xset,\mathcal{T})\rightarrow[0,1]$ if for every $Q \in \mathcal{Q}_x$ the posterior distribution $Q(\cdotmid M, \stat)$ is also in $\mathcal{Q}_x$ almost surely over~$T \sim M(z,\cdot)$, for all $z\in \mathsf{X}$.
\end{definition}

A composition property for a guarantee specifies the privacy of a composed mechanism, $M_1 \otimes M_2$, where the second mechanism $M_2((x,T),\cdot)$ is possibly dependent on the output of the first~$T \sim M_1(x,\cdot)$. We reference measurable spaces $(\mathsf{T}_1,\mathcal{T}_1)$ and $(\mathsf{T}_2,\mathcal{T}_2)$ where $\mathsf{T}_k$ is the output space of $M_k$.

\begin{proposition}\label{prop:composition}
     Let $M_k:(\mathsf{Y}_k,\mathcal{T}_k)\rightarrow [0,1]$ for $k\in\{1,2\}$ where $\mathsf{Y}_1 = \xset$ and $\mathsf{Y}_2 = \xset \times \mathsf{T}_1$. Assume $M_1$ is $(\mathcal{S},\mathcal{Q}_x,\kappa_1,\delta_1)$-PP, and $M_2((\cdot,t),\cdot)$ is $(\mathcal{S},\mathcal{Q}_x,\kappa_2,\delta_2)$-PP for all $t \in \mathsf{T}_1$.  If $\mathcal{Q}_x$ is conjugate to $M_1$, then $M_1 \otimes M_2$ is $(\mathcal{S},\mathcal{Q}_x,\kappa_1+\kappa_2,\delta_1 + \delta_2)$-PP.
\end{proposition}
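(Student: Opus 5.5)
The approach is a telescoping decomposition of the relative privacy score, followed by a union bound. Fix $S \in \mathcal{S}$, $x \in \xset$ and $Q \in \mathcal{Q}_x$. Write $(T_1,T_2) \sim (M_1\otimes M_2)(x,\cdot)$. Let $Q_{T_1} = Q(\cdotmid T_1)$ be Receiver's posterior after the first release, and $Q_{T_1,T_2}$ the posterior after both. The key identity is
\begin{equation*}
    \Delta_S(Q,(T_1,T_2),x) = \bigl[S(Q,x) - S(Q_{T_1},x)\bigr] + \bigl[S(Q_{T_1},x) - S(Q_{T_1,T_2},x)\bigr],
\end{equation*}
whose first term is $\Delta_S(Q,T_1,x)$ for $M_1$. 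Provided the scores are finite, this is just adding and subtracting $S(Q_{T_1},x)$; if infinite values can occur, I would restrict to the event where they are finite or adopt a convention for $\infty-\infty$.

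The second step identifies the second bracket. By sequential Bayes updating (the regular conditional distribution of $x$ given $(T_1,T_2)$ equals the update of $Q_{T_1}$ by the kernel $M_2((\cdot,T_1),\cdot)$ at observation $T_2$), we have $Q_{T_1,T_2} = (Q_{T_1})(\cdotmid T_2)$, where the update uses the mechanism $z \mapsto M_2((z,t_1),\cdot)$ with $t_1 = T_1$ held fixed. Hence the second bracket equals $\Delta_S(Q_{T_1},T_2,x)$ computed for the mechanism $M_2((\cdot,T_1),\cdot)$ with prior $Q_{T_1}$. Conjugacy of $\mathcal{Q}_x$ to $M_1$ gives $Q_{T_1} \in \mathcal{Q}_x$ for $M_1(x,\cdot)$-almost every $T_1$. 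Therefore, conditionally on $T_1=t_1$, the PP guarantee of $M_2((\cdot,t_1),\cdot)$ applies with prior $Q_{t_1}$:
\begin{equation*}
\Pr\bigl[\Delta_S(Q_{T_1},T_2,x) > \kappa_2 \mid T_1 = t_1\bigr] \le \delta_2 .
\end{equation*}
Integrating over $t_1$ gives the unconditional bound $\delta_2$. Here I would rely on the fact that the conditional law of $T_2$ given $T_1=t_1$ is exactly $M_2((x,t_1),\cdot)$, by the definition of the tensor product.

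The final step is the union bound. The event $\{\Delta_S(Q,(T_1,T_2),x) > \kappa_1+\kappa_2\}$ is contained in $\{\Delta_S(Q,T_1,x) > \kappa_1\} \cup \{\Delta_S(Q_{T_1},T_2,x) > \kappa_2\}$. The first event has probability at most $\delta_1$ because $M_1$ is PP, and the second has probability at most $\delta_2$. So the composed probability is at least $1-\delta_1-\delta_2$. Since $S$, $x$ and $Q$ were arbitrary, taking the infimum gives the claim.

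The main obstacle is the measure-theoretic part of the second step. One must check that the two-stage posterior agrees almost surely with the one-shot posterior under $M_1\otimes M_2$, so that iterated regular conditional distributions coincide. One must also check that the map $t_1\mapsto \Pr[\cdot\mid T_1=t_1]$ is measurable, so the integration is valid. Only almost-sure membership $Q_{T_1}\in\mathcal{Q}_x$ is needed, since null sets do not affect the bound.
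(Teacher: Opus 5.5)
Your proposal is correct and follows the paper's proof. Both insert the intermediate posterior $Q_{T_1}$, use conjugacy so that $Q_{T_1}\in\mathcal{Q}_x$ almost surely, apply the guarantee of $M_2((\cdot,T_1),\cdot)$ conditionally on $T_1$, integrate over $T_1$, and combine with the guarantee for $M_1$. Your union bound is the complementary form of the paper's Boole--Fr\'{e}chet inequality on the intersection $\{S(Q,x)\le S(Q_{T_1},x)+\kappa_1\}\cap\{S(Q_{T_1},x)\le S(Q_{T_1,T_2},x)+\kappa_2\}$. The sequential-updating identity $Q_{T_1,T_2}=(Q_{T_1})(\cdot\mid T_2)$, which you state explicitly, is used implicitly by the paper when it writes $\Delta_S(Q_{T_1},T_2,x)=S(Q_{T_1},x)-S(Q_{T_1,T_2},x)$.
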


\subsection{Post-Processing}\label{sec:postprocessing}
In this section we distinguish between two types of post-processing properties that are desirable privacy guarantees. To assist with the exposition, we use privacy classes generated by privacy definitions. In particular, let $\mathrm{D}$ be a specific persuasive privacy guarantee from Definition~\ref{def:pp} and denote the privacy class generated by $\mathrm{D}$ as $\mathfrak{C}(\mathrm{D})$. Using this description we can describe two types of post-processing properties. We will use a Markov kernel $K:(\mathsf{T}_1,\mathcal{T}_2) \rightarrow [0,1]$. Unlike the case of composition, the kernel $K$ is independent of the data $x$.
\begin{definition}[Receiver Post-Processing]\label{def:recpost}
    A guarantee $\mathrm{D}$ satisfies the receiver post-processing property if 
    $M \in \mathfrak{C}(\mathrm{D})$ implies that $M\otimes K \in \mathfrak{C}(\mathrm{D})$ for all Markov kernels $K$ independent of the data $x$.
\end{definition}
The receiver post-processing property dictates that after observing the mechanism output, Receiver cannot gain additional information about the data by post-processing the output. That is, the privacy guarantee remains no matter what transformation is applied to the output by Receiver. The above interpretation of this property is a cornerstone justification for the adversarial robustness of differential privacy. Here, we establish it for persuasive privacy.
\begin{proposition}\label{prop:recpost}
    All persuasive privacy guarantees satisfy the receiver post-processing property.
\end{proposition}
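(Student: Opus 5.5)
The plan is to show that appending a data-independent kernel $K$ leaves Receiver's data-posterior unchanged. The relative privacy score of $M\otimes K$ then coincides with that of $M$, and the tail probability in \eqref{eq:privacyobj} is the same for both mechanisms. Fix an arbitrary $(\mathcal{S},\mathcal{Q}_x,\kappa,\delta)$-PP guarantee $\mathrm{D}$, a mechanism $M\in\mathfrak{C}(\mathrm{D})$ with output space $(\mathsf{T}_1,\mathcal{T}_1)$, and a kernel $K$ that is independent of $x$. Write $K(t_1,\cdot)$ for $K((x,t_1),\cdot)$. The output of $M\otimes K$ is the pair $(T_1,T_2)$, with $T_1\sim M(x,\cdot)$ and $T_2\mid T_1 \sim K(T_1,\cdot)$.

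\textbf{Step 1 (posterior is unchanged).} For a data-prior $Q$, the joint law of $(X,T_1,T_2)$ under Receiver's model is
\[
Q(\rmd z)\,M(z,\rmd t_1)\,K(t_1,\rmd t_2).
\]
Since $K$ does not depend on $z$, $X$ and $T_2$ are conditionally independent given $T_1$. I will verify directly that $Q(\cdotmid T_1,T_2) := Q(\cdotmid T_1)$ is a version of the regular conditional distribution of $X$ given $(T_1,T_2)$. For measurable $A\subset\xset$ and $B\in\mathcal{T}_1\otimes\mathcal{T}_2$, both $\int_B Q(A\mid t_1)\,\Pr(\rmd t_1,\rmd t_2)$ and $\Pr(X\in A,(T_1,T_2)\in B)$ reduce, via Fubini, to
\[
\int\!\!\int Q(A\mid t_1)\,1_B(t_1,t_2)\,K(t_1,\rmd t_2)\,\Pr(\rmd t_1).
\]
This uses the defining property of $Q(\cdotmid T_1)$ applied to the function $t_1\mapsto \int 1_B(t_1,t_2)K(t_1,\rmd t_2)$.

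\textbf{Step 2 (scores and probabilities coincide).} With this version, for every $S\in\mathcal{S}$, $x\in\xset$ and $Q\in\mathcal{Q}_x$,
\[
\Delta_S(Q,(T_1,T_2),x) = \Delta_S(Q,T_1,x).
\]
The event $\{\Delta_S\le\kappa\}$ therefore depends only on $T_1$. Its probability under $(M\otimes K)(x,\cdot)$ equals its probability under the $T_1$-marginal, which is $M(x,\cdot)$. Hence the infimum in \eqref{eq:privacyobj} for $M\otimes K$ equals that for $M$, which is at least $1-\delta$, so $M\otimes K\in\mathfrak{C}(\mathrm{D})$. Because $\mathrm{D}$ was arbitrary, every persuasive privacy guarantee has the property.

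\textbf{Main obstacle.} The delicate point is the non-uniqueness of regular conditional distributions. Posteriors are determined only almost surely under Receiver's prior predictive $\int Q(\rmd z)M(z,\cdot)$, whereas \eqref{eq:privacyobj} evaluates them under $M(x,\cdot)$ for a fixed $x$, and $M(x,\cdot)$ need not be dominated by the prior predictive. An arbitrary version could therefore differ on an $M(x,\cdot)$-non-null set. I would resolve this by working with the canonical construction of the posterior, namely the likelihood form $Q(\rmd z)m(z,t)/\int Q(\rmd z')m(z',t)$ when densities exist, or more generally whatever version the paper fixes for $M$. For that construction the factor $K(t_1,t_2)$ (or its density) cancels identically, so the equality in Step 2 holds pointwise rather than merely almost surely.
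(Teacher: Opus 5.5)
Your proof is correct and follows the paper's argument. The paper likewise notes that, because $K$ is data-independent, $Q_{T_1,T_2}=Q_{T_1}$; hence $\Delta_S(Q,(T_1,T_2),x)=\Delta_S(Q,T_1,x)$, and the tail probabilities for $M\otimes K$ and $M$ coincide. The paper simply asserts the posterior equality, whereas you verify that $Q(\cdot\mid T_1)$ is a version of the conditional distribution given $(T_1,T_2)$ and fix a canonical likelihood-form posterior so that the equality holds under $M(x,\cdot)$. Those two steps add rigour the paper omits.
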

Next we consider post-processing by Sender. This following alternative is what is known in the literature as the ``post-processing inequality''. 
\begin{definition}[Sender Post-Processing]\label{def:sendpost}
     A guarantee $\mathrm{D}$ satisfies the sender post-processing property if 
    $M \in \mathfrak{C}(\mathrm{D})$ implies that $MK \in \mathfrak{C}(\mathrm{D})$ for all Markov kernels $K$ independent of the data $x$.
\end{definition}
The mechanism $MK$ is often referred to as ``chaining'' $M$ and $K$ \citep{hayProgrammingFrameworkOpenDP2021}. The sender post-processing property dictates that further (random) transformations of a private output by Sender preserves the privacy guarantee, when only the final transformed output is shared with Receiver. This property is useful as a tool to establish privacy for complex mechanisms by transformations of simpler mechanisms for which a guarantee can be established. This is the typical use of the ``post-processing inequality'' in the literature. 

We can also state that receiver post-processing is a special case of sender post-processing by taking $K = \mathrm{Id} \otimes K^\prime$ and observing that $MK = M\otimes K^\prime$, for a Markov kernel $K^\prime$ independent of the data. As such, sender post-processing is a stronger requirement than receiver post-processing. 

Whilst persuasive privacy satisfies receiver post-processing, it does not satisfy the sender post-processing property.
\begin{proposition}\label{prop:sendpost}
    There exists a persuasive privacy guarantee that does not satisfy the sender post-processing property.
\end{proposition}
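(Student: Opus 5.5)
The plan is to give an explicit counterexample on a two-point data space. I will use a singleton prior class and the $0$--$1$ ``guess the dataset'' privacy score, which is the discrete analogue of the interval score of Example~\ref{ex:interval}. I will then exhibit a mechanism $M$ that is PP while a data-independent coarsening $MK$ is not. The mechanism is possible because the relative score penalises only those outputs after which Receiver's optimal guess becomes correct. A garbling $K$ can merge a rare ``revealing'' output with a common ``misleading'' one. The merged output then becomes revealing and occurs with large probability.

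\textbf{Setup.} Take $\xset=\{0,1\}$, $\dset=\xset$ and $\rho(d,x)=1\{d\neq x\}$. The associated privacy score (Proposition~\ref{prop:scoringrule}) is $S(P,x)=1\{x\neq m^P\}$, where $m^P$ is the mode of $P$; I will only use $P$ without ties. Let $\mathcal{Q}=\{Q\}$ with $Q(0)=0.6$ and $Q(1)=0.4$, and take $\kappa=1/2$ and $\delta=0.05$. Since $S$ takes values in $\{0,1\}$, the event $\Delta_S(Q,T,x)>\kappa$ occurs exactly when $S(Q,x)=1$ and $S(Q_T,x)=0$, that is, when $x=1$ and the posterior mode is $1$. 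For $x=0$ the condition \eqref{eq:privacyobj} therefore holds trivially. So PP reduces to
\[
\Pr_1\bigl[Q_T(1)>Q_T(0)\bigr]\le\delta .
\]

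\textbf{Mechanism and garbling.} Let $M$ output $T\in\{a,b,c\}$ with
\[
M(1,\cdot)=(\delta,\tfrac12,\tfrac12-\delta),\qquad M(0,\cdot)=(0,0.34,0.66).
\]
The posterior favours $1$ exactly when $0.4\,M(1,t)>0.6\,M(0,t)$. I check the three outputs in turn:
\begin{itemize}
\item at $a$ the posterior favours $1$, since $0.4\delta>0$;
\item at $b$ it favours $0$, since $0.6\cdot0.34=0.204>0.2$;
\item at $c$ it favours $0$, since $0.6\cdot0.66>0.4\cdot0.45$.
\end{itemize}
Hence $\Pr_1[\text{failure}]=\delta$, and $M$ is $(S,\{Q\},\kappa,\delta)$-PP.

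Now let $K$ be the deterministic, data-independent kernel that maps $a,b\mapsto a'$ and $c\mapsto c'$. For $MK$, the output $a'$ has likelihoods $0.5+\delta=0.55$ under $x=1$ and $0.34$ under $x=0$. Since $0.4\cdot0.55=0.22>0.204$, the posterior at $a'$ favours $1$. The output $c'$ is unchanged from $c$ and still favours $0$. So under $x=1$ the failure probability of $MK$ is $0.55>\delta$, and $MK\notin\mathfrak{C}(\mathrm{D})$. This proves Proposition~\ref{prop:sendpost}.

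\textbf{Main obstacle.} The delicate step is tuning the likelihoods so that both $b$ and $c$ are individually ``misleading'' (posterior mode $0$) while $\{a,b\}$ jointly is ``revealing''. This needs $0.6\,M(0,b)$ to sit strictly between $0.4\,M(1,b)$ and $0.4\bigl(M(1,a)+M(1,b)\bigr)$, and the numbers above satisfy both inequalities. I will also note why a randomised $M$ with $\delta>0$ is needed. With this binary score, any deterministic partition mechanism in the class must be trivial. Moreover, for the log score with a singleton prior and $\delta=0$, garbling preserves the posterior-ratio bound, because $Q_{K(T)}$ is a mixture of the $Q_T$.
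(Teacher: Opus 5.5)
Your counterexample is correct. With $Q=(0.6,0.4)$ and $\kappa=1/2$, the only failure event is $x=1$ together with posterior mode $1$. Under $M$ this happens only at $a$, with probability $\delta$. After $K$ pools $a$ with $b$, the likelihood ratio at $a'$ is $0.55/0.34>1.5$, so $a'$ becomes revealing and failure has probability $0.55$. To make $S$ a genuine scoring rule on all of $\mathcal{P}$, fix a tie-breaking convention for $m^P$ (say $m^P=0$ on ties). This keeps $S$ proper, and since no ties arise in your computations nothing else changes.

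The paper argues differently. It invokes Proposition~\ref{prop:pdp2pp} to identify $(\epsilon,\delta)$-PDP with the guarantee $(L,\mathcal{H},\epsilon,\delta)$-PP. It then cites the known fact that PDP fails the post-processing inequality \citep{kifer2012axiomatic,meiser2018approximate}.

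\textbf{What the paper's route buys.} It is two lines long, and it shows that the guarantee which fails is one of real practical interest, namely PDP. However, it depends on Proposition~\ref{prop:pdp2pp} and on an external counterexample the reader has to look up.

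\textbf{What your route buys.} It is self-contained and checkable by arithmetic. It exposes the mechanism behind the failure: a data-independent garbling can pool a rare revealing output with a common misleading one, which raises the probability of the bad event that the tail condition of Assumption~\ref{ass:tailprob} bounds. It also shows the failure is not an artefact of the log score or the neighbouring class $\mathcal{H}$, since it already occurs with a singleton prior class and a $0$--$1$ score. The price is that your guarantee is deliberately artificial rather than one anyone would deploy.

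Your side remark that $\delta>0$ is essential fits the paper's example as well. Pure DP is closed under post-processing, so the PDP counterexample also lives entirely in the $\delta>0$ regime.
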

 Despite this negative result, it is still trivial for Sender to release output from $MK$ with the same guarantee as $M$. Specifically, releasing output from $M\otimes K$, instead of just from the marginal $MK$, will satisfy the receiver post-processing property but on Sender's side.
From the view of Bayesian Persuasion, this result indicates that Sender can control the privacy loss by releasing output from $M$ in addition to $MK$ (jointly), when the influence on decisions exerted by output from $MK$ is worse than $M$ (or not established).

\section{Differential Privacy}\label{sec:dp}

We can interpret some variants of differential privacy in the persuasive privacy framework. First we state the definition for probabilistic differential privacy \citep[PDP,][]{machanavajjhala2008privacy,gotz2011publishing, meiser2018approximate}. To define PDP, we fix a binary neighbour relation\footnote{We also use ``$\sim$'' to relate random variables to their distribution, but the intended meaning will be clear from context.} ``$\sim$''. We denote the set of neighbours of some $x \in \xset$ by $\mathfrak{N}_x = \{x^\prime\in\xset:x^\prime\sim x\}$, and the set of all neighbours by $\mathfrak{N}=\{(x,x^\prime) \in \xset^2:x^\prime \in \mathfrak{N}_x\}$.  For example, $\mathfrak{N} = \{(x,x^\prime) \in \xset^2: H(x,x^\prime)\leq 1\}$ where $H$ is the Hamming distance, is typical. We denote a pair of neighbours $(x, x^\prime) \in \mathfrak{N}$ with the shorthand $x\sim x^\prime$ for simplicity.

\begin{definition}[Probabilistic Differential Privacy]\label{def:pdp}
    A mechanism $M$ is said to be $(\varepsilon,\delta)$-PDP if
    \begin{equation*}
        \inf_{x\sim x^\prime}\PrMx\left[ m(x,T) \leq \exp\{\varepsilon\} m(x^\prime,T) \right] \geq 1 - \delta,
    \end{equation*}
    where $T\sim M(x,\cdot)$ and $m(x,\cdot)$ is the probability density (mass) function of $M(x, \cdot)$, with respect to a measure that dominates $M(x, \cdot)$ and $M(x', \cdot)$.
\end{definition}
We note that alternative, but equivalent, definitions of PDP also exist in the literature \citep[e.g.,][Definition 4]{meiser2018approximate}.

Having defined PDP, we now prove an equivalence to PP mechanisms. Let $L$ be the negative log-probability score for discrete distributions,\footnote{The negative log-probability score for discrete distributions is formalised in Appendix~\ref{app:discretelogprob}.} and consider a class of neighbouring alternative hypothesis data-priors
$\mathcal{H}  = \{ Q \in \mathcal P_2: \exists (x,x^\prime)\in\mathfrak{N}, Q(\{ x , x^\prime \})= 1 \}$, where $\mathcal{P}_2$ is the class of two-component discrete probability distributions. Note that~$\mathcal{H}$ does not depend on the value of $x$.
\begin{proposition}\label{prop:pdp2pp}
    A mechanism $M$ is $(\epsilon,\delta)$-PDP if and only if $M$ is $(L,\mathcal{H},\epsilon,\delta)$-PP.
\end{proposition}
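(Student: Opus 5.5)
The plan is to compute the relative privacy score $\Delta_L(Q,T,x)$ explicitly for two-point priors and show that, after optimising over $Q \in \mathcal{H}$, the PP event coincides with the PDP event.

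\emph{Setup.} Fix $x \in \xset$ and $Q \in \mathcal{H}$ supported on a neighbouring pair $\{z,z'\}$ with $Q(\{z\}) = q$ and $Q(\{z'\}) = 1-q$. If $x \notin \{z,z'\}$, then under the discrete negative log-probability score we have $L(Q,x) = L(Q_T,x) = +\infty$. Following the convention of Appendix~\ref{app:discretelogprob}, $\Delta_L$ is then either non-positive or undefined, and I will adopt the convention that makes the event trivially hold. So only $x \in \{z,z'\}$ matters; without loss of generality $x = z$ and $x' = z' \sim x$.

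\emph{Closed form.} The data-posterior satisfies
\begin{equation*}
Q_T(\{x\}) = \frac{q\, m(x,T)}{q\, m(x,T) + (1-q)\, m(x',T)}.
\end{equation*}
Writing $r = m(x',T)/m(x,T)$, this gives
\begin{equation*}
\Delta_L(Q,T,x) = -\log q + \log Q_T(\{x\}) = -\log\bigl(q + (1-q)\, r\bigr).
\end{equation*}
This is monotone decreasing in $r$, and for fixed $r$ it is monotone in $q$. If $r<1$, it increases to $-\log r$ as $q \to 0$. If $r \ge 1$, then $\Delta_L \le 0 \le \epsilon$ for every $q$. Hence, $\Pr_x$-almost surely,
\begin{equation*}
\sup_{q \in (0,1)} \Delta_L = \max\{0, -\log r\} = \max\Bigl\{0, \log \frac{m(x,T)}{m(x',T)}\Bigr\}.
\end{equation*}
One should also handle $q \in \{0,1\}$ if $\mathcal{P}_2$ allows degenerate weights. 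I expect "two-component" excludes them, or else that they give $\Delta_L = 0$ or the $\infty - \infty$ case covered by the convention. The events $m(x,T) = 0$ have $\Pr_x$-probability zero. The event $m(x',T) = 0$ gives $\Delta_L = -\log q$, consistent with $r = 0$.

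\emph{Equivalence.}

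For the direction PDP $\Rightarrow$ PP: on the event $\{m(x,T) \le e^{\epsilon} m(x',T)\}$ we have $r \ge e^{-\epsilon}$. Then $q + (1-q) r \ge e^{-\epsilon}$, so $\Delta_L \le \epsilon$ for every $q$. Taking infima over $x$, over $Q$ (that is, over neighbours $x'$ and weights $q$), and over the singleton $\mathcal{S}$ gives~\eqref{eq:privacyobj}.

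For the direction PP $\Rightarrow$ PDP: fix $x \sim x'$ and let $E_q = \{\Delta_L(Q_q,T,x) \le \epsilon\} = \{q + (1-q) r \ge e^{-\epsilon}\}$. These events decrease as $q \downarrow 0$, since $q + (1-q)r$ is a convex combination moving toward $r$ (when $r < 1$). Their intersection is $\{r \ge e^{-\epsilon}\}$, which is exactly the PDP event. By continuity from above of $\Pr_x$, the bound $\Pr_x(E_q) \ge 1-\delta$ for all $q$ passes to the limit, giving $\Pr_x[m(x,T) \le e^{\epsilon} m(x',T)] \ge 1-\delta$. The symmetric role of $x$ (the case where the truth is $z'$) is covered because the neighbour relation is handled pair by pair via the infimum over $x \sim x'$.

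\emph{Main obstacle.} The delicate step is the measure-theoretic bookkeeping rather than the algebra. This includes choosing the dominating measure so that $m$ and the posterior are well defined, treating zero densities and infinite scores consistently with the appendix's definition of $L$, and justifying the limit $q \to 0$. A further point is whether the neighbour relation needs to be symmetric. If it is not, the prior on $\{x, x'\}$ still ranges over both orderings, since $\mathcal{H}$ only requires $(x,x') \in \mathfrak{N}$ for some labelling. I would check this against the PDP definition's infimum over ordered pairs.
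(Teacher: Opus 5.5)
Your proposal is correct and is essentially the paper's proof. Both discard priors with no mass on the truth via the $\infty-\infty=0$ convention, write $\Delta_L=-\log\{w+(1-w)\,m(x',T)/m(x,T)\}$ for $w\in(0,1)$, and identify the worst case $w\to 0$, giving $\max\{0,\log m(x,T)/m(x',T)\}$, with $\epsilon\ge 0$ removing the $0$. Your nested-events and continuity-from-above argument supplies the justification for exchanging $\inf_{w}$ with $\PrMx$ that the paper only asserts via convexity in $w$. Your symmetry caveat is also well placed: the PDP $\Rightarrow$ PP direction, in both your proof and the paper's, tacitly needs $\sim$ to be symmetric.
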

Since pure DP is a special case of PDP, when $\delta=0$ we recover $\epsilon$-DP.

The proof reveals that the infimum in the definition of persuasive privacy occurs in the limit of Receiver's prior probability on the true data $Q(\{x\}) \rightarrow 0$. We can use this to interpret probabilistic differential privacy as protecting against a worst case where Receiver has vanishingly small probability on the truth. That is, under the negative log-probability score and class of neighbouring alternative hypothesis priors, the gain in information about $x$ is largest when Receiver has near zero probability on this outcome.

Proposition~\ref{prop:pdp2pp} can also be proved for a smaller class of data-priors $\mathcal{H}_x = \{Q\in\mathcal{P}_2: Q(\{x\}) = 1 - Q(\{x^\prime\})> 0, x^\prime \in \mathfrak{N}_x\}$, limited to the priors which have positive probability on the truth. This indicates that using the larger class gives no additional privacy guarantee. Essentially, we ignore data-priors that have no mass on the truth as there will be no change in the privacy score in this case. 

Considering the properties discussed in Section~\ref{sec:postprocessing}, it is well known that PDP does not satisfy the post-processing inequality \citep{kifer2012axiomatic,meiser2018approximate}. However, our discussion reveals that this is not a drawback for the privacy properties of PDP, rather a restriction of the tools that can be used to establish a PDP guarantee. Furthermore, it is trivial to gain a PDP guarantee for a post-processed mechanism using Proposition~\ref{prop:recpost}. One can simply augment the output of the transformed mechanism with the output of the original mechanism. 

Establishing PDP as a special case of our framework contributes new semantics to the understanding of differential privacy. Sender wishes to limit the relative privacy score, the difference of negative log-probability scores under the data-posterior and data-prior, and assesses the worst case under the class of neighbouring alternative hypothesis data-priors. Our explicit construction illuminates potential areas of weakness in the differential privacy setup.

A game-theoretic derivation of R\'{e}nyi DP \citep{mironov2017renyi} is achieved by changing the assessment of privacy from a tail probability condition (Assumption~\ref{ass:tailprob}) to an expected value condition. The details are provided in Appendix~\ref{app:renyi}, along with a connection to the broader notion of $f$-divergence privacy.

\section{Privacy for Deterministic Mechanisms}\label{sec:Deterministic}
In this section we provide two illustrative yet important examples where is it possible to assess the persuasive privacy of a deterministic function of the data. This contrasts differential privacy and its variants, where non-vacuous guarantees cannot be constructed for deterministic mechanisms.

\subsection{Private Empirical Average} \label{sec:average}

The canonical mechanism considered in data privacy is the empirical average $\bar{x}=\frac{1}{n}\sum_{i=1}^n x_i$ for $n\geq 2$. Adding noise to the average can yield a differentially private mechanism. Assuming $\xset$ is bounded, Laplace or Gaussian noise yield a pure or approximate DP guarantee, respectively \citep{dwork2006approxdp}. However, in traditional SDC it is often assumed that for large~$n$ no noise is actually required for the average to be private \citep[assuming that differencing attacks and group disclosures are not possible, see e.g.,][]{smithMeasureDisclosureRisk2008}, whilst for DP, the scale of the additive noise of a mean goes to zero as $n \rightarrow \infty$.

We construct a persuasive privacy guarantee to formalise the intuition that releasing the empirical average of the data with no noise is private, under reasonable assumptions. To begin we set out the constituent elements of this guarantee. For simplicity we assume $\xset=\mathbb{R}^n$.

The Dawid--Sebastiani Score \citep[DSS,][]{dawid1999coherent} is a scoring rule depending only on the first two moments of a distribution $P$. The DSS is a proper scoring rule when the variance of $P$ is finite for all $P \in \mathcal{P}$, and can be seen as a special case of the negative log-probability score for Gaussian distributions. For our purposes, we define a marginal version of the DSS.  
\begin{definition}[Marginal Dawid--Sebastiani Score]
    If $Q$ has support on $\mathsf{X} \subset \mathbb{R}^n$, the marginal DSS is defined as 
    \begin{equation*}
        D_i(Q,x) = \log \sigma^2_i(Q) + \frac{[x_i - \mu_i(Q)]^2}{\sigma^{2}_i(Q)},
    \end{equation*}
    where $\mu_i(Q)$ and $\sigma^2_i(Q)$ are the marginal mean and variance, respectively, for the $i$th dimension of $Q$.
\end{definition}
The marginal DSS is a proper scoring rule, recovering the $i$th marginal mean and variance. That is, if $D_i(Q,P) = D_i(Q,Q)$ then $\mu_i(Q) = \mu_i(P)$ and $\sigma^2_i(Q) = \sigma^2_i(P)$. From the marginal DSS, we construct a set of scoring rules $\mathcal{I} = \{D_i\}_{i=1}^n$ to measure the privacy of each marginal in the dataset. One interpretation of this choice is that there are $n$ individuals in the dataset each with $x_i\in \mathbb{R}$, and we wish to protect the worst-case outcome for all individuals.

Next we define the class of data-priors held by Receiver. Let $\mathcal{N}(\mu,\Sigma)$ denote a multivariate Gaussian distribution on $\mathbb{R}^n$ for $\mu \in \mathbb{R}^n$ and positive-definite $\Sigma \in \mathbb{R}^{n\times n}$. Let $\Sigma = \sigma^\top \Phi \sigma$ where $\sigma$ is the vector of marginal standard deviations, and $\Phi$ is the correlation matrix of $\Sigma$. Finally, let $c_\Phi \in [1,\infty)$ denote the condition number of $\Phi$ and define the averages $\bar{\mu} = \frac{1}{n}\sum_{i=1}^n \mu_i$ and $\overline{\Sigma} = \frac{1}{n^2}\sum_{i=1}^{n} \sum_{j=1}^{n} [\Sigma]_{ij}$. Note that $\bar{\mu}$ and $\overline{\Sigma}$ are the mean and variance of $\overline{X}$, respectively, where~$X \sim \mathcal{N}(\mu,\Sigma)$. Define the class of data-priors $\mathcal{G}_{x}^{r}$ as the set
\begin{equation*}
    \left\{ \mathcal{N}(\mu,\Sigma): \frac{(\bar{x}-\bar{\mu})^2}{\overline{\Sigma}} \leq r_1 , c_\Phi \leq r_2 \left(1- \frac{\sigma_i^2}{\Vert\sigma \Vert_2^2} \right)~\forall i \right\},
\end{equation*}
for $r_1 > 0$ and $r_2 > 1$. 
The first condition defining $\mathcal{G}_{x}^{r}$ states that Receiver's prior guess for $\bar{x}$ cannot be too poor, relative to the variance of the average under Receiver's prior. In contrast, the second condition limits how strong Receiver's prior is, by controlling degeneracy in the data-prior.

\begin{proposition}\label{prop:privavg}
    The average mechanism 
    $M(x,\cdot) = \delta_{\bar{x}}$ satisfies $(\mathcal{I},\mathcal{G}_{x}^{r},\kappa^r,0)$-PP with $\kappa^r = r_1+ \log r_2$.
\end{proposition}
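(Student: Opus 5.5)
Since $M(x,\cdot)=\delta_{\bar x}$ is deterministic, $T=\bar x$ almost surely, so $\delta=0$ will follow once we show that for every $i$, every $x$ and every $Q=\mathcal N(\mu,\Sigma)\in\mathcal G^r_x$,
\[
D_i(Q,x)-D_i(Q_{\bar x},x)\le r_1+\log r_2 .
\]
The plan is to compute the Gaussian posterior explicitly and split the relative score into a quadratic part and a log-variance part. These are bounded by the first and second conditions of $\mathcal G^r_x$ respectively.

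\textbf{Step 1: the posterior marginal.} Set $a=\frac1n\mathbf 1$, so that $\bar X=a^\top X$. The pair $(X_i,\bar X)$ is bivariate Gaussian with means $(\mu_i,\bar\mu)$, variances $(\sigma_i^2,\overline\Sigma)$ and covariance $c_i=(\Sigma a)_i$. Conditioning on $\bar X=\bar x$, the $i$th marginal of $Q_{\bar x}$ is Gaussian with
\[
\mu_i'=\mu_i+\frac{c_i(\bar x-\bar\mu)}{\overline\Sigma},\qquad s_i^2=\sigma_i^2-\frac{c_i^2}{\overline\Sigma}.
\]
(The posterior on $\mathbb R^n$ is degenerate, but its $i$th marginal is not.) This gives
\[
\Delta=\log\frac{\sigma_i^2}{s_i^2}+\frac{(x_i-\mu_i)^2}{\sigma_i^2}-\frac{(x_i-\mu_i')^2}{s_i^2}.
\]

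\textbf{Step 2: the quadratic part.} Use the chain-rule decomposition of the bivariate Mahalanobis form into the $\bar X$-marginal part plus the conditional $X_i\mid\bar X$ part:
\[
\frac{(x_i-\mu_i')^2}{s_i^2}+\frac{(\bar x-\bar\mu)^2}{\overline\Sigma}=(z-m)^\top C^{-1}(z-m),
\]
where $z=(x_i,\bar x)$, $m=(\mu_i,\bar\mu)$ and $C$ is the $2\times 2$ covariance of $(X_i,\bar X)$. The full Mahalanobis form dominates the Mahalanobis form of any marginal, because the latter is the minimum of the former over the other coordinate. Hence the right-hand side is at least $(x_i-\mu_i)^2/\sigma_i^2$. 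Rearranging, the quadratic part of $\Delta$ is at most $(\bar x-\bar\mu)^2/\overline\Sigma\le r_1$.

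\textbf{Step 3: the log part (main obstacle).} We need $\sigma_i^2/s_i^2\le r_2$, and it is here that the somewhat unusual condition $c_\Phi\le r_2(1-\sigma_i^2/\Vert\sigma\Vert_2^2)$ must be used. Write $\Sigma=D\Phi D$ with $D=\mathrm{diag}(\sigma)$, and use the variational form of the conditional variance:
\[
s_i^2=\min_t \mathrm{Var}(X_i-t\bar X)=\min_t (e_i-ta)^\top D\Phi D(e_i-ta)\ge \lambda_{\min}(\Phi)\min_t\Vert D(e_i-ta)\Vert_2^2 .
\]
Setting $u=t/n$, the last minimum is $\min_u\big[\sigma_i^2(1-u)^2+u^2B\big]$ with $B=\Vert\sigma\Vert_2^2-\sigma_i^2$. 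A one-line calculus computation gives the value $\sigma_i^2B/\Vert\sigma\Vert_2^2$. Since $\Phi$ has unit diagonal, its trace is $n$, so $\lambda_{\max}(\Phi)\ge1$ and therefore $\lambda_{\min}(\Phi)\ge 1/c_\Phi$. Combining,
\[
\frac{s_i^2}{\sigma_i^2}\ge\frac{1-\sigma_i^2/\Vert\sigma\Vert_2^2}{c_\Phi}\ge\frac1{r_2}.
\]
So the log term is at most $\log r_2$. Adding this to Step 2 gives $\Delta\le r_1+\log r_2=\kappa^r$ for all $i$, $x$ and $Q\in\mathcal G^r_x$, with probability one. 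This is the $(\mathcal I,\mathcal G^r_x,\kappa^r,0)$-PP claim.
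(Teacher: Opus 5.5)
Your proof is correct and follows the same route as the paper. You compute the explicit Gaussian conditional marginal, bound the quadratic part by $(\bar x-\bar\mu)^2/\overline{\Sigma}$ by discarding a nonnegative square (your marginal-Mahalanobis argument is the paper's completed square), and bound the log term through the eigenvalues of $\Phi$. The only minor difference is in Step 3: you lower-bound $\min_t \mathrm{Var}(X_i - t\bar X)$ directly with $\lambda_{\min}(\Phi)$ and an exact minimisation. The paper instead writes $1-v_i^2/v=\gamma_i^\top\Phi\gamma_i/\sigma^\top\Phi\sigma$ with $\gamma_i=\sigma-(e_i^\top\Phi\sigma)e_i$ and bounds numerator and denominator by $\lambda_n$ and $\lambda_1$. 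Both arrive at $(1-\sigma_i^2/\Vert\sigma\Vert_2^2)/c_\Phi$.
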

 Proposition~\ref{prop:privavg} establishes a persuasive privacy guarantee for a deterministic mechanism. 
 The parameters $(\kappa^r,0)$ of the privacy guarantee are determined by the relative strength of Receiver, which is specified by the values $r_1$ and $r_2$ defining the class of data-priors $\mathcal{G}_{x}^{r}$.

 The data-posterior that is considered in this example is a degenerate Gaussian distribution constrained to the manifold determined by $\bar{x}$. In some sense, we avoid the degeneracy and complexity of the multivariate data-posterior by using the marginal DSS to assess privacy. This is an interesting contrast to the persuasive privacy interpretation of PDP, where the complexity of the multivariate distribution is handled by using a restrictive class of discrete data-priors.

In light of Proposition~\ref{prop:privavg}, the data-prior restrictions in $\mathcal{G}_x^r$ have the following interpretation. Firstly, Receiver gains too much information when they hold a data-prior where the average of the means, $\bar{\mu}$, is far away from the truth with strong conviction (i.e., a small~$\overline{\Sigma}$ value). Secondly, no single marginal variance can account for too much of the total marginal variance, relative to the condition number of the correlation matrix, which measures the overall strength of correlations. Otherwise, revealing $\bar{x}$, which constrains the prior to an $(n-1)$-dimensional manifold, will indirectly reveal too much about the $i$th component (assuming $r_1$ is small). This can occur when $\sigma_i \gg \sigma_j$ for all $j \neq i$, since knowledge of the manifold and the remaining components ($j \neq i$) with high certainty determines $x_i$.

Clearly, $(\mathcal{I},\mathcal{G}_x^r,\kappa^r,0)$-PP is not robust to all Receivers with Gaussian priors, as we restrict the class of priors considered. However, it may be possible for robust versions of the mean (or indeed other deterministic mechanisms) to have such a property. Considering the interactions between stochasticity and robustness under our general framework is left for future work.

\subsection{Private Cell Suppression} \label{sec:cell}
In this example, Sender wishes to safely reveal a vector of counts $x \in  \mathbb{N}^n$ by suppressing entries with small values. The vector~$x$ may relate to a histogram or contingency table summarising subpopulation counts for example, cases that frequently arise in official statistics where cell suppression is a common disclosure control method \citep{australianbureauofstatisticsTreatingAggregateData2021, cdcSuppressionRatesCounts2026}. We will refer to each element $x_i$ as a cell and assume Sender assesses privacy individually for each cell. Suppose Sender considers any cell value greater than $x_i \geq r$ for some $r \geq 2$ to be inherently private. Intuitively, Sender is stating that a count of $r$ in a cell is enough to obscure any information about a single individual. In contrast, for cells with $x_i < r$ Sender uses the (marginal) log-probability score to assess privacy. Hence, the privacy score for the $i$th cell can be written as
\begin{equation*}
    L_i^r(Q,x) =
    \begin{cases}
     -\log q_i(x_i) &\text{if}~ x_i < r, \\
     \infty & x_i \geq r,
    \end{cases}
\end{equation*} 
where $q_i$ is the $i$th marginal PMF of the distribution $Q$.

We assume that, under Receiver's data-prior, the distribution of each cell belongs to the set $\mathcal{C}^{h,\alpha} = \{P\in \mathcal{P}_\mathbb{N}: P(\{z < h\}) \geq \alpha\}$, where $\mathcal{P}_\mathbb{N}$ is the set of distributions over the natural numbers. Further, the cells are independent, so that the Receiver's data priors are the $n$-product of $\mathcal{C}^{h,\alpha}$ which we denote by $\mathcal{C}^{h,\alpha}_n$.
Here $h \ge 2$ is the suppression threshold and $\alpha$ expresses the strength of Receiver's belief in $x_i < h$. 

The cell-suppression mechanism releases the value of $x_i$ deterministically if $x_i \geq h$, and otherwise censors the cell. 
That is, for each cell $i$, we have~$M_i^h(x_i, \{x_i\}) = 1$ if $x_i \geq h$, and $M_i^h(x_i, \{-1\}) = 1$ otherwise, where $-1$ denotes suppression. We now state the privacy guarantee.
\begin{proposition}\label{prop:privcells}
    If $h\geq r$, the cell-suppression mechanism 
    $\prod_{i=1}^n M_i^h(x_i,\cdot)$ satisfies $(\mathcal{L}_n^r,\mathcal{C}^{h,\alpha}_{n},-\log \alpha,0)$-PP.
\end{proposition}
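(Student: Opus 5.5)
Since the mechanism is deterministic, $\PrMx$ is a point mass at $T=(M_i^h(x_i))_{i=1}^n$, so with $\delta=0$ the claim reduces to a pointwise inequality. For every $i$, every $x\in\mathbb{N}^n$ and every $Q\in\mathcal{C}^{h,\alpha}_n$ we need
\begin{equation*}
\Delta_{L_i^r}(Q,T,x)=L_i^r(Q,x)-L_i^r(Q_T,x)\le -\log\alpha .
\end{equation*}
Here $\mathcal{L}_n^r=\{L_i^r\}_{i=1}^n$.

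First I will use independence. $Q$ is a product $\prod_j Q^{(j)}$ with $Q^{(j)}\in\mathcal{C}^{h,\alpha}$, and the mechanism acts coordinatewise with a likelihood that factorises over cells. Hence the data-posterior $Q_T$ is again a product, and its $i$th marginal is the posterior of $Q^{(i)}$ given only $T_i$. The problem therefore reduces to a single cell with prior $q_i$ and output $T_i$.

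Next I split into cases on $x_i$.
\begin{itemize}
\item If $x_i\ge r$, both scores equal $\infty$. I will adopt the convention that the difference is $0$ (or $-\infty$), so the bound holds trivially. I will state this convention explicitly, since $\infty-\infty$ is the one delicate point.
\item If $x_i<r\le h$, then $T_i=-1$ (the cell is suppressed). The posterior is $q_i$ conditioned on $\{z<h\}$, so
\begin{equation*}
q_{i,T}(x_i)=\frac{q_i(x_i)}{Q^{(i)}(\{z<h\})}.
\end{equation*}
The hypothesis $h\ge r$ is exactly what guarantees suppression whenever the score is finite.
\end{itemize}

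In the second case,
\begin{equation*}
\Delta=-\log q_i(x_i)+\log q_i(x_i)-\log Q^{(i)}(\{z<h\})=-\log Q^{(i)}(\{z<h\})\le-\log\alpha,
\end{equation*}
using $Q^{(i)}\in\mathcal{C}^{h,\alpha}$.

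If $q_i(x_i)=0$, both scores are $+\infty$. I will handle this with the same convention, or by noting that the posterior is only defined almost surely. Taking the infimum over $i$, $x$ and $Q$ then gives the $(\mathcal{L}_n^r,\mathcal{C}^{h,\alpha}_n,-\log\alpha,0)$-PP guarantee.

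The main obstacle is not the computation. It is the bookkeeping of infinite scores (the cases $x_i\ge r$ and $q_i(x_i)=0$) together with the rigorous justification that the marginal posterior of cell $i$ depends only on $T_i$. That justification follows from the product structure of prior and kernel via Bayes' rule on the discrete space $\mathbb{N}^n$.
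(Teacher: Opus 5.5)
Your proposal is correct and follows essentially the same route as the paper's proof. Both reduce to the $i$th marginal posterior, use $h \ge r$ to force suppression whenever $x_i < r$ (giving $\Delta_i = -\log Q^{(i)}(\{z<h\}) \le -\log\alpha$), and set $\Delta_i = 0$ when $x_i \ge r$; your explicit justification that the $i$th marginal posterior depends only on $T_i$, and your treatment of the $\infty-\infty$ convention, tighten steps the paper leaves implicit (it adopts that convention silently, as in its proof of Proposition~\ref{prop:pdp2pp}).
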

 In this setting, $\alpha$ is a lower bound for the weakness of Receiver. Intuitively, a weak Receiver has a data-prior with low probability on small cells counts, and learns more (than a strong Receiver) when a cell is suppressed. To protect against weak Receivers, Sender can increase the suppression threshold $h$ to lower the corresponding $\alpha$. 

This example demonstrates another deterministic mechanism for which a persuasive privacy guarantee can be instantiated. Whilst the class of Receiver data-priors considered is large, the independence assumption is strong. Future work could relax this by considering cell summation constraints or parametric prior classes with dependence.

\section{Conclusion}

We have considered a class of robust asymmetric Stackelberg games to construct a framework for defining privacy guarantees. In this game, Sender can be thought to model Receiver as a Bayesian agent, whilst making decisions that are robust to stochasticity and the data-prior Receiver may hold. This setup has several potential benefits, including the possibility of generating fit-for-purpose guarantees using privacy functions and scores. We demonstrated how to recover PDP using our framework and also accommodated R\'{e}nyi DP with a simple modification of our assumptions. Beyond this we focussed on examples of privacy guarantees for deterministic mechanisms, but acknowledge that we are yet to systematically demonstrate how scoring rules can be used to adapt privacy guarantees to different contexts \citep[see in particular the literature on the difficulties in valuing privacy in real-world situations---e.g.,][]{acquistiEconomicsPrivacy2016, lindgreenPrivacyEconomicPerspective2018}. This direction is left for future work.

The ability to construct persuasive privacy guarantees for deterministic mechanisms suggests the potential for new privacy definitions with improved privacy--utility trade-offs. However, care must be taken to ensure that such guarantees are adequate for their specific context. For example, the privacy guarantee for the empirical average in Section~\ref{sec:average} achieves perfect statistical efficiency, at the cost of a restricted class of data-priors and the specific use of the marginal DSS as a privacy score.

More generally, the flexibility in persuasive privacy definitions can be misused. Privacy scores or data-priors can be chosen that are inappropriate for the data context. However, the transparent communication of a privacy guarantee means that a given guarantee can always be tested for appropriateness where it is deployed. Practically, the class of priors should be chosen to be as large as possible whilst remaining realistic for the given scenario. 

We did not explicitly consider statistical utility in our work, but general methods for determining the utility-optimal mechanisms in a given privacy class is an interesting future research direction. Further, work applying our framework to specific contexts whilst reasoning about appropriate data-prior classes and privacy scores is ongoing.

% Acknowledgements should only appear in the accepted version.
\section*{Acknowledgements}
We are grateful to participants of the Les Houches Privacy Workshop in 2024 and 2025 for lively discussions that informed this paper, with special thanks to Andrea Bertazzi and Stanislas du Ch\'{e}. Further thanks to participants at the Venice Privacy Workshop in 2026. We also acknowledge the four anonymous reviewers for their helpful comments. JJB was supported by a 2025 Early Mid-Career Research Grant from the University of Adelaide. JJB and CPR were supported by the European Union (ERC-2022-SYGOCEAN-101071601). Views and opinions expressed are however those of the author(s) only and do not necessarily reflect those of the European Union or the European Research Council Executive Agency. Neither the European Union nor the granting authority can be held responsible for them. CPR is further funded by a PR[AI]RIE-PSAI chair from the Agence Nationale de la Recherche (ANR-23-IACL-0008).

\section*{Impact Statement}

This paper presents work whose goal is to advance the field of machine learning. There are many potential societal consequences of our work, none of which we feel must be specifically highlighted here.

\def\UrlBreaks{\do\/\do-\do_}%line break on /, hyphens and underscores in URLs 
\bibliography{persuasive}
\bibliographystyle{icml2026}

%%%%%%%%%%%%%%%%%%%%%%%%%%%%%%%%%%%%%%%%%%%%%%%%%%%%%%%%%%%%%%%%%%%%%%%%%%%%%%%
%%%%%%%%%%%%%%%%%%%%%%%%%%%%%%%%%%%%%%%%%%%%%%%%%%%%%%%%%%%%%%%%%%%%%%%%%%%%%%%
% APPENDIX
%%%%%%%%%%%%%%%%%%%%%%%%%%%%%%%%%%%%%%%%%%%%%%%%%%%%%%%%%%%%%%%%%%%%%%%%%%%%%%%
%%%%%%%%%%%%%%%%%%%%%%%%%%%%%%%%%%%%%%%%%%%%%%%%%%%%%%%%%%%%%%%%%%%%%%%%%%%%%%%
\newpage
\appendix
\onecolumn
\section{The Negative Log-Probability Score for Discrete Distributions}\label{app:discretelogprob}
To accommodate the set of all discrete distributions (which does not have a common dominating measure) we define the negative log-probability score for discrete distributions as follows. Let $(\mathsf{X},\mathcal{X})$ be a measurable space such that $\{x\}\in\mathcal{X}$ for all $x \in \mathsf{X}$. Consider probability measures of the form $Q = \sum_{z \in \mathsf{Z}} w_z \delta_{z}$ for some countable set $\mathsf{Z} \subset \xset$, with $w_z > 0$ for all $z\in\mathsf{Z}$ and $\sum_{z \in \mathsf{Z}} w_z = 1$. We denote the set of probability measures of this form by $\mathcal{P}_{\le \aleph_0}$. 

\begin{definition}\label{def:logprobdiscrete}
Let the discrete negative log-probability score $L:(\mathcal{P}_{\le \aleph_0},\mathsf{X}) \rightarrow \mathbb{R} \cup \{-\infty,\infty\}$ be defined as
\begin{equation*}
    L(Q,x) = 
    -\log Q(\{x\}),
\end{equation*}
with convention that $\log 0 = -\infty$.
\end{definition}
Hence, for any $Q = \sum_{z \in \mathsf{Z}} w_z \delta_{z}\in \mathcal{P}_{\le \aleph_0}$, the score is $L(Q,z) = -\log w_z$ for $z \in \mathsf{Z}$ and $L(Q,z) = \infty$ for $z \notin \mathsf{Z}$. The scoring rule $L$ has the following property.
\begin{proposition}
    The scoring rule $L$ is strictly proper relative to the class of distributions $\mathcal{P}_{\le \aleph_0}$. 
\end{proposition}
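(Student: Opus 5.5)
We need to show that for $P,Q\in\mathcal{P}_{\le\aleph_0}$ we have $L(Q,Q)\le L(P,Q)$, with equality only when $P=Q$. The plan is to reduce the claim to Gibbs' inequality (nonnegativity of the Kullback--Leibler divergence between discrete distributions with countable support).

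Write $Q=\sum_{z\in\mathsf{Z}_Q} q_z\delta_z$ and $P=\sum_{z\in\mathsf{Z}_P} p_z\delta_z$, with all weights strictly positive. Since $Q$ is concentrated on the countable set $\mathsf{Z}_Q$, the expected score is
\begin{equation*}
L(P,Q)=\mathbb{E}_{X\sim Q}[-\log P(\{X\})]=\sum_{z\in\mathsf{Z}_Q} q_z\,(-\log P(\{z\})),
\end{equation*}
a countable sum of terms in $[0,\infty]$. The terms are nonnegative because $P(\{z\})\le 1$, so the sum is well defined.

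\textbf{Case 1: $\mathsf{Z}_Q\not\subset\mathsf{Z}_P$.} Pick $z\in\mathsf{Z}_Q\setminus\mathsf{Z}_P$. Then $P(\{z\})=0$, so with the convention $\log 0=-\infty$ we get $L(P,Q)=\infty$. The self-score $L(Q,Q)=\sum_z q_z(-\log q_z)$ is the Shannon entropy of $Q$, which lies in $[0,\infty]$. We need a strict inequality here, because $P\neq Q$ in this case. It holds whenever $H(Q)<\infty$.

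The main obstacle is the infinite-entropy case, where both sides equal $\infty$. I would handle it in one of two ways. Either adopt the standard convention of restricting to $Q$ with finite entropy, i.e.\ understanding strict propriety relative to the subclass where $L(Q,Q)<\infty$. Or state propriety in the form $L(P,Q)-L(Q,Q)=\mathrm{KL}(Q\Vert P)\ge0$, defined termwise. I would choose the finite-entropy convention and note it explicitly.

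\textbf{Case 2: $\mathsf{Z}_Q\subset\mathsf{Z}_P$, with $H(Q)<\infty$.} Then
\begin{equation*}
L(P,Q)-L(Q,Q)=\sum_{z\in\mathsf{Z}_Q} q_z\log\frac{q_z}{p_z}.
\end{equation*}
This manipulation is valid because $L(Q,Q)$ is finite. If $L(P,Q)=\infty$ the inequality is trivially strict; otherwise both sums are finite.

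Next apply $\log u\le u-1$ with $u=p_z/q_z$. This gives
\begin{equation*}
\sum_{z\in\mathsf{Z}_Q} q_z\log\frac{p_z}{q_z}\le\sum_{z\in\mathsf{Z}_Q}(p_z-q_z)=P(\mathsf{Z}_Q)-1\le0,
\end{equation*}
so $L(Q,Q)\le L(P,Q)$.

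For equality, both inequalities in the last display must be tight. Tightness of $\log u\le u-1$ forces $p_z=q_z$ for every $z\in\mathsf{Z}_Q$. Tightness of $P(\mathsf{Z}_Q)-1\le0$ forces $P(\mathsf{Z}_Q)=1$, and hence $\mathsf{Z}_P=\mathsf{Z}_Q$, since all weights of $P$ are positive. Together these give $P=Q$, which establishes strict propriety.
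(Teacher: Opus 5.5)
Your proof is correct and is essentially the paper's argument. Both split on whether $\mathsf{Z}_Q \subset \mathsf{Z}_P$ and then use Gibbs' inequality. The only difference is that the paper renormalises $P$ on $\mathsf{Z}_Q$ before citing Gibbs, whereas you apply $\log u \le u-1$ directly to the sub-probability weights, which absorbs the renormalisation into the bound $P(\mathsf{Z}_Q)-1\le 0$.

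Your infinite-entropy caveat is more than a convention, and the paper's proof overlooks it. Its Case 1 asserts $L(Q,Q) < L(P,Q) = \infty$, which tacitly assumes $L(Q,Q)<\infty$.

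Your own inequality settles the matter if you write it additively: $-q_z\log q_z \le -q_z\log P(\{z\}) + P(\{z\}) - q_z$. Summing over $z\in\mathsf{Z}_Q$ gives $L(Q,Q) \le L(P,Q) + P(\mathsf{Z}_Q) - 1 \le L(P,Q)$ in $[0,\infty]$, with no subtraction of possibly infinite quantities. This has two consequences:
\begin{itemize}
\item Weak propriety holds on all of $\mathcal{P}_{\le\aleph_0}$.
\item If $H(Q)=\infty$, then $L(P,Q)=L(Q,Q)=\infty$ for every $P$, so strictness genuinely fails. Such $Q$ exist as soon as $\xset$ is infinite, e.g.\ weights proportional to $1/(k\log^2 k)$ for $k\ge 2$.
\end{itemize}

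Restricting $Q$ to finite entropy is therefore the correct form of the statement. This does not affect the paper's later use of $L$, which is only on the two-point class $\mathcal{P}_2$.
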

\begin{proof}
    Consider $P,Q \in \mathcal{P}_{\le \aleph_0}$. We have $P = \sum_{y \in \mathsf{Y}} v_y \delta_{y}$ and $Q = \sum_{z \in \mathsf{Z}} w_z \delta_{z}$ for some $\mathsf{Y}, \mathsf{Z}\subset \mathsf{X}$ and positive probabilities $v_y$ and $w_z$ defined over $y \in \mathsf{Y}$ and $z \in \mathsf{Z}$, respectively. The expected score $L(P,Q)$ is
\begin{equation*}
    L(P,Q) = 
    \begin{cases}
        -\sum_{z\in \mathsf{Z}}w_{z} \log v_z & \text{if}~\mathsf{Z} \subset \mathsf{Y}, \\
        \infty & \text{otherwise,}
    \end{cases}      
\end{equation*}
whilst $L(Q,Q) = -\sum_{z\in \mathsf{Z}}w_{z} \log w_z$.

Case 1. If $\mathsf{Z} \not\subset \mathsf{Y}$ then $L(Q,Q)<L(P,Q)=\infty$ and clearly $P \neq Q$.

Case 2. If $\mathsf{Z} \subset \mathsf{Y}$ then 
$L(P,Q) = -\sum_{z\in \mathsf{Z}}w_{z} \log v_z \geq -\sum_{z\in \mathsf{Z}}w_{z} \log v_z^\prime$ where $v_z^\prime = \frac{v_z}{\sum_{u\in \mathsf{Z}} v_u} \geq v_z$ are renormalised probabilities. Hence 
$L(P,Q) \geq -\sum_{z\in \mathsf{Z}}w_{z} \log w_z = L(Q,Q)$ by Gibbs' inequality, where equality holds if and only if $v_z^\prime = w_z$ for all $z \in \mathsf{Z}$. Note that in the case of equality $\mathsf{Z}=\mathsf{Y}$ and $v_z^\prime = v_z$ for all $z \in \mathsf{Z}$ due to the positive probability constraints on the weights, and then $-\sum_{z\in \mathsf{Z}}w_{z} \log v_z = -\sum_{z\in \mathsf{Z}}w_{z} \log w_z$.

Therefore, $L(Q,Q) \leq L(P,Q)$ for all $P,Q\in \mathcal{P}_{\le \aleph_0}$ and $L(P,Q) = L(Q,Q)$ if and only if $P=Q$.

\end{proof}
We can also conclude that the discrete negative log-probability score is strictly proper with respect to $\mathcal{P}_2$, the class of discrete two-component probability distributions, i.e., the class of $Q = \sum_{z \in \mathsf Z} w_z \delta_z$ with $\vert\mathsf{Z}\vert = 2$.

\section{The Existence of a Decision Problem for Any Proper Scoring Rule}\label{app:proper}

Consider a probability distribution $P \in \mathcal{P}$ on the measurable space $(\xset,\mathcal{X})$. We say that a Bayesian decision problem, a tuple $(\ell,\mathcal{D})$ with loss function $\ell$ and decision space $\mathcal{D}$, generates the scoring rule $S(P,x) = \ell(d^P,x)$ for $P \in \mathcal{P}$ and $x \in \xset$, where $d^P = \arg\inf_{d\in \mathcal{D}}\mathbb{E}_{X\sim P}[\ell(d,X)]$. 
  
 The following proposition demonstrates that every proper scoring rule $S$ has an associated Bayesian decision problem that generates $S$. Let $S:(\mathcal{P},\xset) \rightarrow \mathbb{R} \cup \{-\infty,\infty\}$ be a scoring rule.
\begin{proposition}
    If $S$ is proper with respect to $\mathcal{P}$ then $\ell:(\mathcal{P},\xset)\rightarrow \mathbb{R} \cup \{-\infty,\infty\}$ defined as
    \begin{equation*}
        \ell(d,x) = S(d,x),
    \end{equation*}
    defines a Bayesian decision problem $(\ell,\mathcal{P})$ that generates $S$ as the corresponding scoring rule.
\end{proposition}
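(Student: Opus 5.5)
We will verify directly that, for the decision space $\mathcal{D}=\mathcal{P}$ and loss $\ell(d,x)=S(d,x)$, the Bayes act under any belief $P\in\mathcal{P}$ can be taken to be $d^P=P$ itself, so that $\ell(d^P,x)=S(P,x)$ for all $x\in\xset$. The whole argument rests on propriety of $S$. No earlier result is needed beyond the definition of a proper scoring rule.

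First, fix $P\in\mathcal{P}$ and compute the Bayes risk of an arbitrary decision $d\in\mathcal{P}$:
\begin{equation*}
\mathbb{E}_{X\sim P}[\ell(d,X)] = \mathbb{E}_{X\sim P}[S(d,X)] = S(d,P).
\end{equation*}
By propriety, $S(P,P)\le S(d,P)$ for every $d\in\mathcal{P}$. Hence $P$ attains the infimum over $\mathcal{D}$, so $P\in\arginf_{d\in\mathcal{D}}\mathbb{E}_{X\sim P}[\ell(d,X)]$. Choosing $d^P=P$ then gives $\ell(d^P,x)=S(P,x)$ pointwise, which is exactly the statement that $(\ell,\mathcal{P})$ generates $S$.

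The main point requiring care is \emph{non-uniqueness} of the minimiser. When $S$ is proper but not strictly proper, other $d\neq P$ may also satisfy $S(d,P)=S(P,P)$. In that case $\ell(d,x)$ could differ from $S(P,x)$ pointwise, even though the expectations under $P$ agree. The paper's notation $d^P\in\arginf$ already allows a selection from the argmin set, so we will state the result with the selection $d^P=P$ and remark that any other minimiser yields a scoring rule with identical expected scores under $P$. Under strict propriety the minimiser is unique and the ambiguity disappears.

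A secondary technicality is that scores may take values in $\mathbb{R}\cup\{-\infty,\infty\}$. We will assume, as is implicit in the definition of propriety, that the expected scores $S(d,P)$ are well defined, so the comparison above is meaningful in the extended reals.
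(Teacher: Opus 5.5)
Your proposal is correct and follows the paper's proof essentially verbatim: you compute $\mathbb{E}_{X\sim P}[\ell(d,X)] = S(d,P)$, use propriety to show that $d^P = P$ is a Bayes act, and conclude $\ell(d^P,x) = S(P,x)$. You also handle non-unique minimisers explicitly by selecting $d^P=P$ from the $\arginf$, which tightens the paper's argument; the paper simply asserts that the minimiser is $d^Q = Q$.
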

The proof is somewhat trivial but included for completeness.
\begin{proof}
    Let $Q \in \mathcal{P}$. If $\ell(d,x) = S(d,x)$ then 
    $\mathbb{E}_{X\sim Q}[\ell(d,X)] = \mathbb{E}_{X\sim Q}[S(d,X)] = S(d,Q)$ is minimised at $d^Q = Q$ since $S$ is proper. Let $S^\prime(Q,x)$ be the proper scoring rule generated by the decision problem $(\ell,\mathcal{P})$. Since $d^Q=Q$, the scoring rule $S^\prime$ satisfies $S^\prime(Q,x) = \ell(d^Q,x) = S(Q,x)$, as required.
\end{proof}
   
\section{R\'{e}nyi Differential Privacy, $f$-Divergence Privacy and Privacy in Expectation}\label{app:renyi}

In the main text, Assumption~\ref{ass:tailprob} dictates that a persuasive privacy definition is a tail probability condition. In this section, we instead assume that privacy is assessed with an expectation condition. That is, we replace Assumption~\ref{ass:tailprob} with the following alternative.

\begin{manualtheorem}{4*}\label{ass:expectation} For a given data-prior $Q$ and dataset $x$, Sender considers a mechanism $M$ private only if
\begin{equation}\label{eq:expectation}
    \ExMx\left[g\left\{\Delta(Q,T,x)\right\}) \right] \leq \kappa,
\end{equation}
for some maximum acceptable privacy loss $\kappa \geq 0$ and suitable transformation $g:\mathbb{R}\rightarrow \mathbb{R}$. 
\end{manualtheorem}
In \eqref{eq:expectation} the expectation is computed with respect to $T\sim M(x,\cdot)$. We will use $\ExMx$ as shorthand notation for the remainder of this section. The choice of $g$ must be considered carefully to ensure that privacy statements are not vacuous for the chosen relative privacy score. Intuitively, at least, we expect $g$ to be non-decreasing to preserve the orientation of the relative privacy score. Further, note that Assumption~\ref{ass:worstpriordataset} is unchanged,\footnote{Note that alternatives to Assumption~\ref{ass:worstpriordataset} using expectations could also be explored.} but now refers to \eqref{eq:expectation} rather than  \eqref{eq:tailprob}. Thus, we can define \textit{Persuasive Privacy in Expectation} (PPE) as follows.
\begin{definition}[Persuasive Privacy in Expectation]\label{def:ppe}
    A mechanism $M$ is said to be $(\mathcal{S},\mathcal{Q}_x,g,\kappa)$-PPE if \begin{equation}\label{eq:privacyobjexp}
     \sup_{S \in \mathcal{S}}\sup_{x \in \xset}\sup_{Q \in \mathcal{Q}_x}\ExMx\left[g\{\Delta_S(Q,T,x)\}\right] \leq \kappa.
\end{equation}
\end{definition}
Unlike the case for persuasive privacy, where we want the worst-case relative privacy score to be bounded (with high probability), here we control the worst-case expected relative privacy score. This explains the use of supremum in~\eqref{eq:privacyobjexp}, rather than infimum as in~\eqref{eq:privacyobj}. 

We will now show that R\'{e}nyi DP \citep{mironov2017renyi}, among a wider class of privacy definitions (including $f$-divergence privacy), can be recovered from Definition~\ref{def:ppe}. Recall that, for $\alpha \in [1, \infty)$, the $\alpha$-R\'{e}nyi divergence $D_\alpha$ is defined as
   \begin{equation} \label{renyi-def}
        D_\alpha[P\Vert P'] = 
        \begin{cases}
        \frac{1}{\alpha-1} \log \mathbb{E}_{X\sim P}\left[\frac{\rmd P}{\rmd P'}(X)^{\alpha-1} \right] = \frac{1}{\alpha-1} \log\mathbb{E}_{X\sim P'}\left[\frac{\rmd P}{\rmd P'}(X)^{\alpha}\right]  & \text{if } \alpha>1, \\
        \mathbb{E}_{X\sim P}\left[\log\frac{\rmd P}{\rmd P^\prime}(X)  \right] = \mathbb{E}_{X \sim P'}\left[\frac{\rmd P}{\rmd P^\prime}(X)\log \frac{\rmd P}{\rmd P^\prime}(X)  \right] & \text{if } \alpha=1,
        \end{cases}
    \end{equation}
where $\mathbb E_{P}$ denotes the expectation with respect to $P$. The case $\alpha = 1$ corresponds to the Kullback–Leibler divergence. $\alpha$-R\'enyi divergence can also be defined for $\alpha = \infty$ by taking the limit of $D_\alpha$ as $\alpha \to \infty$. The resulting privacy definition coincides with pure $\epsilon$-DP and is omitted here. 

Using R\'{e}nyi divergences we can define R\'{e}nyi DP as follows.
\begin{definition}[R\'{e}nyi Differential Privacy]
    A mechanism $M$ is said to be $(\alpha,\epsilon)$-RDP for some $\alpha \geq 1$ and $\epsilon \geq 0$ if
    \begin{equation*}
        \sup_{x\sim x^\prime} D_\alpha[M(x^\prime,\cdot)\Vert M(x,\cdot)] \leq \epsilon.
    \end{equation*}
\end{definition}
Note the symmetry in $x$ and $x^\prime$ to obtain the original expression in \citet{mironov2017renyi}. We state below two propositions relating R\'enyi DP definitions to an equivalent PPE guarantee.

Recall the class of neighbouring alternative hypothesis data-priors $\mathcal H = \{ Q \in \mathcal P_2: \exists z\sim z^\prime, Q(\{ z , z^\prime \})= 1 \}$ and consider log-probability score $L(Q,x) = - \log Q(\{x\})$ for $Q \in \mathcal H$ as in Appendix \ref{app:discretelogprob}. Further, let $\mathrm{id}:\mathbb{R} \rightarrow \mathbb{R}$ denote the identity function, $\mathrm{id}(x)=x$ for $x \in \mathbb{R}$.
\begin{proposition}\label{prop:rdp2ppe1}
     A mechanism $M$ is $(1,\epsilon)$-RDP if and only if $M$ is $(L,\mathcal{H},\mathrm{id},\epsilon)$-PPE. 
 \end{proposition}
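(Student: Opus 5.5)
We compute the expected relative privacy score in closed form for a two-point prior and then take the supremum over the class $\mathcal{H}$. Fix $x\in\xset$ and $Q\in\mathcal{H}$.

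If $Q(\{x\})=0$, then $L(Q,x)=\infty$. As remarked after Proposition~\ref{prop:pdp2pp}, no information is gained in this case. We adopt the same convention and treat $\Delta_L(Q,T,x)$ as contributing no privacy loss, i.e.\ $\Delta_L=0$ (or the case is simply excluded by restricting to $\mathcal{H}_x$). So assume $Q=w\delta_x+(1-w)\delta_{x'}$ with $w\in(0,1]$ and $x'\sim x$. The case $w=1$ gives $Q_T=\delta_x$ and $\Delta=0$.

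For $w\in(0,1)$, write $m(x,\cdot),m(x',\cdot)$ for densities with respect to a dominating measure. Bayes' rule gives
\begin{equation*}
Q_T(\{x\})=\frac{w\,m(x,T)}{w\,m(x,T)+(1-w)m(x',T)},
\end{equation*}
and therefore
\begin{equation*}
\Delta_L(Q,T,x)=\log\frac{Q_T(\{x\})}{w}=-\log\Bigl(w+(1-w)\tfrac{m(x',T)}{m(x,T)}\Bigr).
\end{equation*}
Taking expectations under $T\sim M(x,\cdot)$ yields $\ExMx[\Delta_L]=-\ExMx[\log(w+(1-w)R)]$ with $R=m(x',T)/m(x,T)$.

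The key step is to bound this in $w$. By concavity of the logarithm,
\begin{equation*}
\log(w+(1-w)R)\ge w\log 1+(1-w)\log R,
\end{equation*}
so
\begin{equation*}
\ExMx[\Delta_L]\le (1-w)\,\ExMx[\log(m(x,T)/m(x',T))]=(1-w)\,D_1[M(x,\cdot)\Vert M(x',\cdot)].
\end{equation*}
Conversely, as $w\to0$ the integrand $-\log(w+(1-w)R)$ increases monotonically to $-\log R$. One can check monotonicity in $w$ pointwise: the integrand is decreasing in $w$ where $R<1$, so we should instead argue via the limit directly. By the bound just derived together with Fatou's lemma, or by dominated/monotone convergence applied separately on $\{R\le1\}$ and $\{R>1\}$, we get
\begin{equation*}
\lim_{w\to0}\ExMx[\Delta_L]=D_1[M(x,\cdot)\Vert M(x',\cdot)].
\end{equation*}
Hence
\begin{equation*}
\sup_{Q\in\mathcal{H}}\ExMx[\Delta_L(Q,T,x)]=\sup_{x'\sim x}D_1[M(x,\cdot)\Vert M(x',\cdot)].
\end{equation*}
Taking the supremum over $x$ and using the symmetry of the neighbour relation, which lets us swap the roles of $x$ and $x'$ to match the RDP definition's orientation, gives the equivalence in both directions.

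The main obstacle is justifying the limit interchange when $w\to0$ and handling infinities. On $\{R>1\}$ the integrand is negative and bounded below by $-\log R$, which needs integrability. On $\{R\le1\}$ it is nonnegative and increasing as $w\downarrow0$, so monotone convergence applies. If $D_1=\infty$, the $\{R\le 1\}$ part diverges while the other part stays bounded by $\ExMx[(\log R)_+]$; this bound is at most $\ExMx[R]\le1$ since $\log R\le R$. So the supremum is $\infty$ and both sides fail simultaneously.

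Absolute continuity issues must also be handled. If $M(x',\cdot)$ does not dominate $M(x,\cdot)$, then $R=0$ with positive probability, and both $D_1$ and the supremum are infinite. Finally, the convention for $Q(\{x\})=0$ and the meaning of $(1,\epsilon)$-RDP at $\alpha=1$ (the KL divergence) must be aligned with Appendix~\ref{app:discretelogprob}.
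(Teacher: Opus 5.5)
Your proof is correct and follows the paper's route. You write $\Delta_L=-\log(w+(1-w)R)$ for the two-point prior and use convexity in $w$; your pointwise concavity bound is exactly the chord bound $\mathbb{E}_x[\Delta_L]\le(1-w)D_1$. From there the supremum is approached as $w\to0$, and symmetry of the neighbour relation matches the RDP orientation. Your monotone-convergence argument on $\{R\le1\}$ and dominated-convergence argument on $\{R>1\}$ (using $\mathbb{E}_x[R]\le1$) supply the endpoint-continuity step that the paper's ``convex in $w$, so the supremum is at the endpoints'' leaves implicit; do drop the opening claim that the integrand increases monotonically as $w\downarrow0$, since on $\{R>1\}$ it decreases.
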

\begin{proof}
If $M$ is $(L,\mathcal{H},\mathrm{id},\epsilon)$-PPE then 
    \begin{equation*}
        \sup_{x \in \xset}\sup_{Q \in \mathcal H}\ExMx\left[\Delta_L(Q,T,x)\right] \leq \epsilon,
    \end{equation*}
where $\Delta_L(Q,T,x) = \log\frac{m(x,T)}{m(x,T)w + m(x^\prime, T)(1-w)}$ for some $w \in (0,1)$ if $Q(\{x\}) > 0$ and zero otherwise, following Section~\ref{proof:pdp2pp}, with $m(x,\cdot)$ and $m(x',\cdot)$ respectively the densities of $M(x,\cdot)$ and $M(x', \cdot)$ with respect to a dominating measure (which may depend on $x$ and $x'$). When $Q(\{x\}) = 0$, $\ExMx [\Delta_L(Q,T,x)] = 0$, and otherwise 
\begin{align*}
    \ExMx [\Delta_L(Q,T,x)]
    &= \ExMx \left[ \log \left(\frac{ m(x,T) }{ w m(x,T) + (1-w)m(x^\prime,T) }\right) \right],
\end{align*}
which is convex in $w$, so that its supremum over $w \in (0,1)$ is 
\begin{equation*}
    \max \left\{ \ExMx \left[ \log \frac{ m(x,T) }{m(x^\prime,T) } \right],\ \ExMx \left[ \log \frac{ m(x,T) }{  m(x,T)  } \right]\right\}=\ExMx \left[ \log \frac{ m(x,T) }{m(x^\prime,T) } \right],
\end{equation*}
and hence 
\begin{equation*}
\sup_{x \in \xset}\sup_{Q \in \mathcal H}\ExMx\left[g\{\Delta_L(Q,T,x)\}\right]  = \sup_{x \sim x'} D_1[M(x,\cdot ) \Vert M(x',\cdot) ],
\end{equation*}
as required.
\end{proof}
\begin{proposition}\label{prop:rdp2ppe}
    For $\alpha > 1$, a mechanism $M$ is $(\alpha,\epsilon)$-RDP if and only if $M$ is $(L,\mathcal{H},g_\alpha,e^{(\alpha-1)\epsilon})$-PPE where $g_\alpha(s) = e^{(\alpha-1)s}$.
\end{proposition}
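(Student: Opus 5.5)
The argument follows the proof of Proposition~\ref{prop:rdp2ppe1}, replacing the identity with $g_\alpha$.

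\textbf{Step 1: compute the privacy loss.} Fix $x\in\xset$ and $Q\in\mathcal{H}$.
If $Q(\{x\})=0$, then $\Delta_L(Q,T,x)=0$ by the convention used in the proof of Proposition~\ref{prop:pdp2pp}, so $\ExMx[g_\alpha\{\Delta_L\}]=1$. This is at most $e^{(\alpha-1)\epsilon}$ because $\epsilon\ge 0$.
Otherwise $Q=w\delta_x+(1-w)\delta_{x'}$ with $x'\sim x$ and $w\in(0,1]$. Bayes' rule gives
\[
\Delta_L(Q,T,x)=\log\frac{m(x,T)}{w\,m(x,T)+(1-w)\,m(x',T)},
\]
and therefore
\[
\ExMx[g_\alpha\{\Delta_L(Q,T,x)\}]=\ExMx\left[\left(\frac{m(x,T)}{w\,m(x,T)+(1-w)\,m(x',T)}\right)^{\alpha-1}\right].
\]

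\textbf{Step 2: take the supremum over $w$.} For fixed positive $a=m(x,T)$ and $b=m(x',T)$, the map $w\mapsto (a/(wa+(1-w)b))^{\alpha-1}=a^{\alpha-1}(wa+(1-w)b)^{-(\alpha-1)}$ is convex in $w$. It is a convex decreasing power of an affine function, and $\alpha-1>0$. Taking the expectation preserves convexity, so the supremum over $w\in(0,1]$ is attained at an endpoint or as a limit at an endpoint.
At $w=1$ the value is $1$.
As $w\to 0$, monotone convergence applies, since the integrand is monotone in $w$ pointwise on each of $\{a\ge b\}$ and $\{a<b\}$. Alternatively, convexity plus a bound via Fatou and dominated convergence applied to the separate pieces gives the same conclusion. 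The limit is $\ExMx[(m(x,T)/m(x',T))^{\alpha-1}]=\exp\{(\alpha-1)D_\alpha[M(x,\cdot)\Vert M(x',\cdot)]\}$, by the first form of \eqref{renyi-def}.
The R\'enyi divergence is nonnegative, so this limit is at least $1$. Hence
\[
\sup_{x}\sup_{Q\in\mathcal H}\ExMx[g_\alpha\{\Delta_L\}]=\sup_{x\sim x'}\exp\{(\alpha-1)D_\alpha[M(x,\cdot)\Vert M(x',\cdot)]\}.
\]

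\textbf{Step 3: conclude.} Since $s\mapsto e^{(\alpha-1)s}$ is strictly increasing, the PPE bound $\le e^{(\alpha-1)\epsilon}$ holds if and only if $\sup_{x\sim x'}D_\alpha[M(x,\cdot)\Vert M(x',\cdot)]\le\epsilon$. The neighbour relation is symmetric, so the ordering $D_\alpha[M(x',\cdot)\Vert M(x,\cdot)]$ in the RDP definition gives the same supremum.

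\textbf{Main obstacle.} The delicate step is justifying the $w\to0$ limit when the expectation may be infinite or $m(x',T)$ may vanish. On the event $\{m(x',T)=0\}$, the integrand grows like $w^{-(\alpha-1)}$. If that event has positive $\PrMx$-probability, then both sides are $+\infty$, and this case must be handled separately. Otherwise, monotone convergence on $\{a<b\}$, where the integrand increases as $w\downarrow0$, and dominated convergence on $\{a\ge b\}$, where the integrand is bounded by $(a/b)^{\alpha-1}$, settle the limit.
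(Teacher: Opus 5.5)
Your proposal is correct and follows the paper's proof: reduce to two-point priors, use convexity in $w$ to push the supremum to the endpoints ($w=1$ giving $1$, $w\to0$ giving $\exp\{(\alpha-1)D_\alpha[M(x,\cdot)\Vert M(x',\cdot)]\}$), and invoke symmetry of the neighbour relation, with your justification of the limit–expectation interchange being a rigour step the paper leaves implicit. One small slip there: the monotonicity directions in your last paragraph are swapped. On $\{a<b\}$ the integrand decreases as $w\downarrow0$ and is bounded by $1$, so use dominated convergence. On $\{a\ge b\}$ it increases to $(a/b)^{\alpha-1}$, so use monotone convergence, which also covers the case $\ExMx[(a/b)^{\alpha-1}]=\infty$, where your dominated-convergence bound would not be integrable.
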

\begin{proof}
    If $M$ is $(L,\mathcal{H},g_\alpha,e^{(\alpha-1)\epsilon})$-PPE then 
    \begin{equation}\label{eq:PPE-Renyi}
        \sup_{x \in \xset}\sup_{Q \in \mathcal H}\ExMx\left[g_\alpha\{\Delta_L(Q,T,x)\}\right] \leq e^{(\alpha-1)\epsilon},
    \end{equation}
where $\Delta_L(Q,T,x) = \log\frac{m(x,T)}{m(x,T)w + m(x^\prime, T)(1-w)}$ for some $w \in (0,1)$ if $Q(\{x\}) > 0$ and zero otherwise, following Section~\ref{proof:pdp2pp}, with $m(x,\cdot)$ and $m(x',\cdot) $ respectively the densities of $M(x,\cdot)$ and $M(x', \cdot)$ with respect to a dominating measure (which may depend on $x$ and $x'$). When $Q(\{x\}) = 0$, $\ExMx [g_\alpha\{\Delta_L(Q,T,x)\}] = 1$, and otherwise 
\begin{align*}
    \ExMx [g_\alpha\{\Delta_L(Q,T,x)\}]
    &= \ExMx \left[ \left(\frac{ m(x,T) }{ w m(x,T) + (1-w)m(x^\prime,T) } \right)^{\alpha-1} \right],
\end{align*}
which is convex in $w$ so that its supremum in $w \in (0,1)$ is equal to 
\begin{equation*}
    \max\left\{ 1,\ \ExMx \left[  \left(\frac{ m(x,T) }{ m(x^\prime,T) } \right)^{\alpha-1} \right]\right\} = \exp\left\{(\alpha-1) D_\alpha[ M(x, \cdot) \Vert M(x', \cdot ) ]\right\},
\end{equation*}
 and hence
 \begin{equation*}
     \sup_{x\in\mathsf{X}} \sup_{Q\in\mathcal{H}} \ExMx\left[ g_\alpha\{ \Delta_L(Q,T,x)\}  \right]\leq e^{(\alpha-1)\epsilon} \quad \Longleftrightarrow \sup_{x\sim x'} D_\alpha[ M(x, \cdot) \Vert M(x', \cdot ) ] \leq \epsilon.\qedhere
 \end{equation*}
\end{proof}

We now establish an equivalence between PPE and a class of privacy guarantees generated by $f$-divergences.

Let $f:[0,\infty) \rightarrow ( -\infty, \infty]$ be a convex function satisfying $f(1)=0$, $f(s) < \infty$ for $s > 0$ and $f(0) = \lim_{t\rightarrow 0^+} f(t)$ (with the possibility that this limit is infinite). We will say that any function $f$ satisfying the above requirements is \textit{suitable} to define an $f$-divergence. We recall that the $f$-divergence $D_f$ induced by $f$ is defined as 
\begin{equation} \label{f-div}
    D_f[P\Vert P'] = \mathbb E_{X \sim P'}\left[ f\left(\frac{ \rmd P_a}{\rmd P'} (X) \right)\right] + f'(\infty)P_s(\mathsf T),
\end{equation}
for two probabilities $P$ and $P'$ defined on the same space $(\mathsf{T},\mathcal{T})$, where $P = P_a + P_s$ is the Lebesgue decomposition of $P$ with respect to $P'$ and $f'(\infty):= \lim_{t \to \infty} f(t)/t$.
 
The following proposition can be seen as a generalisation of the previous results for R\'{e}nyi DP (since $\alpha$-R\'enyi divergences are monotone transformations of $f$-divergences).

\begin{proposition}\label{prop:fdp2ppe}
    Suppose $f$ is a suitable function for defining an $f$-divergence $D_f$, and let $g(s) = f(\exp\{-s\})$ for $s\in \mathbb{R} \cup \{\infty\}$. If $f$ is lower semicontinuous, then a mechanism $M$ satisfies
    \begin{equation}\label{eqFDivergenceDP}
        \sup_{x\sim x^\prime} D_f[M(x^\prime,\cdot) \Vert M(x,\cdot)] \leq \kappa,
    \end{equation}
    if and only if $M$ satisfies $(L,\mathcal{H},g,\kappa)$-PPE.
\end{proposition}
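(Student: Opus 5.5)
We follow the template of Propositions~\ref{prop:rdp2ppe1} and~\ref{prop:rdp2ppe}. First, compute $\Delta_L(Q,T,x)$ explicitly for $Q\in\mathcal{H}$ and $T\sim M(x,\cdot)$, using the same reduction as in the proof of Proposition~\ref{prop:pdp2pp}.

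If $Q(\{x\})=0$, then $L(Q,x)=L(Q_T,x)=\infty$. Under the convention used there, $\Delta_L=0$, so $g(0)=f(1)=0$ and such priors contribute nothing. Otherwise $Q=w\delta_x+(1-w)\delta_{x'}$ with $x'\sim x$ and $w\in(0,1]$. The case $w=1$ also gives $\Delta_L=0$. For $w\in(0,1)$, take $m(x,\cdot)$ and $m(x',\cdot)$ to be densities with respect to $\mu=M(x,\cdot)+M(x',\cdot)$. Then
\[
\exp\{-\Delta_L(Q,T,x)\}=w+(1-w)\frac{m(x',T)}{m(x,T)},
\]
which is well defined $M(x,\cdot)$-a.s. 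Hence
\[
\ExMx\bigl[g\{\Delta_L(Q,T,x)\}\bigr]=\ExMx\bigl[f\bigl(w+(1-w)R\bigr)\bigr],\qquad R=\frac{m(x',T)}{m(x,T)}.
\]

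Next, take the supremum over $w$. Since $f$ is convex, the map $w\mapsto f(w+(1-w)R)$ is convex pointwise, so its expectation $\phi(w)$ is convex on $(0,1)$. Therefore $\sup_{w\in(0,1)}\phi(w)=\max\{\lim_{w\to1}\phi(w),\lim_{w\to0}\phi(w)\}$.
\begin{itemize}
\item At $w\to1$ the limit is $f(1)=0$. Justifying this passage to the limit requires some care, but convexity gives monotone bounds on each side.
\item At $w\to0$ we want $\ExMx[f(R)]$. Lower semicontinuity gives $\liminf_{w\to 0}f(w+(1-w)R)\ge f(R)$. Convexity gives the upper bound $f(w+(1-w)R)\le wf(1)+(1-w)f(R)=(1-w)f(R)$. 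Together these identify the limit as $\ExMx[f(R)]$. Integrability issues where $f(R)$ can be $+\infty$ need checking, since $f(0)$ may be infinite; there we would argue by monotone convergence on the positive part.
\end{itemize}
Since $D_f\ge0$, the supremum equals $\max\{0,\ExMx[f(R)]\}$.

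The main obstacle is the final identification $\ExMx[f(R)]=D_f[M(x',\cdot)\Vert M(x,\cdot)]$, including the singular part. Decompose $M(x',\cdot)=P_a+P_s$ relative to $M(x,\cdot)$. Then $R$ equals $\rmd P_a/\rmd M(x,\cdot)$ almost surely under $M(x,\cdot)$, which recovers the first term of \eqref{f-div}. The singular term $f'(\infty)P_s(\mathsf T)$, however, lives on the set where $m(x,T)=0$, and that set has $M(x,\cdot)$-probability zero, so our expectation never sees it.

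I would first try to recover it through the supremum over $x$. The definitions quantify over all ordered neighbouring pairs, and the relation $\sim$ is symmetric, so the same pair can be examined with the roles of $x$ and $x'$ swapped. Failing that, I would use the extended convention in the definition of $g$ at $s=\infty$, or the case $Q(\{x\})>0$ with $Q_T(\{x\})$ computed on a dominating measure, to argue that the singular mass is charged. Consistency with Proposition~\ref{prop:rdp2ppe} suggests that a genuine treatment of this term is needed here. Once it is settled, taking $\sup_{x}\sup_{Q\in\mathcal{H}}$ yields equality with $\sup_{x\sim x'}D_f[M(x',\cdot)\Vert M(x,\cdot)]$, which gives both directions of the equivalence at once.
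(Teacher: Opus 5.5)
Your reduction is sound up to $\sup_{w\in(0,1)}\ExMx[g\{\Delta_L(Q,T,x)\}]=\max\{0,\ExMx[f(R)]\}$. The pointwise bound $f(w+(1-w)R)\le(1-w)f(R)$, combined with Fatou and lower semicontinuity, is the pointwise form of the paper's joint-convexity and lower-semicontinuity argument.

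The gap is the final identification, and none of your proposed remedies can close it. The set $\{m(x,\cdot)=0\}$ is $M(x,\cdot)$-null, so no choice of the value of $g$ at $\pm\infty$ changes $\ExMx$. Swapping the roles of $x$ and $x'$ does not help either when both directions carry singular mass. Concretely, take the following setup:
\begin{itemize}
\item $\mathsf{X}=\{x,x'\}$ with $x\sim x'$;
\item $M(x,\cdot)$ uniform on $[0,2]$ and $M(x',\cdot)$ uniform on $[1,3]$;
\item $f(t)=t\log t$ and $Q=w\delta_x+(1-w)\delta_{x'}$.
\end{itemize}
Under $T\sim M(x,\cdot)$ one gets $\Delta_L=-\log w$ on $\{T<1\}$ and $\Delta_L=0$ on $\{T\ge 1\}$. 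Hence $\ExMx[g\{\Delta_L\}]=\tfrac12 w\log w<0$, and the same holds symmetrically at $x'$. So $M$ is $(L,\mathcal{H},g,0)$-PPE with the genuine expectation, while $D_f[M(x',\cdot)\Vert M(x,\cdot)]=\infty$. With $f(t)=\tfrac12|t-1|$ the same pair gives $\tfrac14$ against $\tfrac12$. Read literally, the statement is false in this case, so the step you defer cannot be derived from the definitions as you interpret them.

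The paper gets past this point by convention rather than by argument. It writes $\ExMx[g\{\Delta_L\}]$ as the integral of $f\bigl((w\,m(x,t)+(1-w)m(x',t))/m(x,t)\bigr)\,m(x,t)$ against $\mu$ over all of $\mathsf{T}$. It then declares $f(c/0)\times 0:=\lim_{b\to0^+}b f(c/b)=c f'(\infty)$. This charges the $M(x,\cdot)$-null set $\{m(x,\cdot)=0\}$ with $(1-w)f'(\infty)P_s(\mathsf{T})$, so the expression becomes $D_f[wM(x,\cdot)+(1-w)M(x',\cdot)\Vert M(x,\cdot)]$. Joint convexity then gives the upper bound $(1-w)D_f[M(x',\cdot)\Vert M(x,\cdot)]$, and lower semicontinuity of $D_f$ gives the matching $\liminf$ as $w\to0$.

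To finish your proof you must do one of two things:
\begin{itemize}
\item Adopt that convention explicitly as part of what $\ExMx$ means in the PPE definition. Your pointwise argument then goes through, because the extra term $(1-w)f'(\infty)P_s(\mathsf{T})$ is convex in $w$ and tends to $f'(\infty)P_s(\mathsf{T})$ as $w\to0$.
\item Assume $M(x',\cdot)\ll M(x,\cdot)$ for all $x\sim x'$. Then $R=\rmd M(x',\cdot)/\rmd M(x,\cdot)$ and your argument is already complete.
\end{itemize}
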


Equation~\ref{eqFDivergenceDP} is a variant of differential privacy called \emph{$f$-divergence privacy} \citep{barberPrivacyStatisticalRisk2014, bartheDifferentialPrivacyComposition2013}.

\begin{proof}
 Recall that  $f:[0,\infty)\rightarrow (-\infty, \infty]$ with $f(1)=0$. 
If $M$ is $(L,\mathcal{H},g,\kappa)$-PPE then 
    \begin{equation*}
        \sup_{x \in \xset}\sup_{Q \in \mathcal H}\ExMx\left[g\{\Delta_L(Q,T,x)\}\right] \leq \kappa,
    \end{equation*}
where $\Delta_L(Q,T,x) = \log\frac{m(x,T)}{m(x,T)w + m(x^\prime, T)(1-w)}$ for some $w \in (0,1)$ if $Q(\{x\}) > 0$ and zero otherwise, following Section~\ref{proof:pdp2pp}, with $m(x,\cdot)$ and $m(x',\cdot) $ respectively the densities of $M(x,\cdot), M(x', \cdot)$ with respect to a dominating measure, say $\mu$ (which may depend on $x$ and $x'$). When $Q(\{x\}) = 0$, $\ExMx [g\{\Delta_L(Q,T,x)\}] = 0$, and otherwise 
\begin{align*}
    \ExMx [g\{\Delta_L(Q,T,x)\}]
    &= \int f\left(\frac{m(x,t)w + m(x^\prime, t)(1-w)}{m(x,t)}\right) m(x,t) \mu(\rmd t) \\ &= D_f[wM(x,\cdot) + (1-w) M(x^\prime,\cdot) \Vert M(x,\cdot)],
\end{align*}
where the second equality follows from the fact that, for $a > 0$, we define
\[f\left(\frac{a}{0}\right) \times 0 := \lim_{b \to 0^+} b f\left(\frac{a}{b} \right) = a f'(\infty).\]
Using the joint convexity property of $f$-divergences, we obtain
\begin{align*}
    D_f[wM(x,\cdot) + (1-w) M(x^\prime,\cdot) \Vert M(x,\cdot)] \leq  (1-w) D_f[M(x^\prime,\cdot) \Vert M(x,\cdot)] \leq D_f[M(x^\prime,\cdot) \Vert M(x,\cdot)],
\end{align*}
where the equality is attained at $w=0$. Hence 
\begin{equation*}\label{eq:maxequalityex}
    \sup_{w\in (0,1)}D_f[wM(x,\cdot) + (1-w) M(x^\prime,\cdot) \Vert M(x,\cdot)] \le D_f[M(x^\prime,\cdot) \Vert M(x,\cdot)],
\end{equation*}
and moreover,
\[\liminf_{w \to 0^+} D_f[wM(x,\cdot) + (1-w) M(x^\prime,\cdot) \Vert M(x,\cdot)] \ge D_f[M(x^\prime,\cdot) \Vert M(x,\cdot)],\]
by lower semicontinuity of $f$ \citep[Theorem~2.34]{ambrosioFunctionsBoundedVariation2000}.
Together, these two inequalities imply
\begin{equation*}
    \sup_{w\in (0,1)}D_f[wM(x,\cdot) + (1-w) M(x^\prime,\cdot) \Vert M(x,\cdot)] = D_f[M(x^\prime,\cdot) \Vert M(x,\cdot)].
\end{equation*}
As such, we have
\begin{equation*}%\label{eq:equivfdivergence}
        \sup_{x \in \xset}\sup_{Q \in \mathcal H}\ExMx\left[g\{\Delta_S(Q,T,x)\}\right] = \sup_{x \sim x^\prime} D_f[M(x^\prime,\cdot) \Vert M(x,\cdot)].\qedhere
\end{equation*}
\end{proof}

\begin{remark}
   Choosing an $f$ that is non-increasing is required to ensure $g$ is non-decreasing, for the sake of suitability of $g$ in \eqref{eq:expectation}, but is not required for the proof. 
\end{remark}

\section{Persuasive Privacy's Relation to Existing Privacy Definitions}\label{app:relation}

\subsection{Quantitative Information Flow}\label{app:QIF}

Quantitative information flow (QIF) is a framework for quantifying the amount of secret information leaked by a data release \citep{alvimMeasuringInformationLeakage2012, alvimScienceQuantitativeInformation2020, smithFoundationsQuantitativeInformation2009}. The basic intuition behind QIF is to measure how much uncertainty in a secret is reduced after observing the output of the data release mechanism $M$.
Here, a secret should be understood as some aspect of the confidential data $x$ that is inputted into the mechanism $M$. More technically, a secret is simply a random variable that, from Receiver's point of view, has a joint distribution with the output of the mechanism. The loss in uncertainty (or dually, the gain in information) in QIF could be measured by the difference between the prior and posterior Shannon entropy of the secret, for example.
This basic intuition behind QIF has parallels with persuasive privacy definitions for which Receiver's loss is a measure of uncertainty of their belief in the secret.

There are further similarities between QIF and persuasive privacy. Both are measures of the data release mechanism, rather than measures of the observed output, and they both assume transparency of the privacy guarantee and the mechanism. Like persuasive privacy, QIF uses Bayesian agents. They both define privacy in terms of an attacker’s loss (or dually, an attacker's ``gain'' in QIF terminology), under the assumptions that the attacker is Bayesian and that they take their Bayes optimal action. Moreover, they likewise view this privacy loss as relative---i.e., they both compare loss before and after the mechanism’s release.

However, there are two substantial differences between QIF and persuasive privacy.  In QIF, the absolute privacy loss is an average of the optimal losses under $Q_T$, averaged over a (marginal) distribution of~$T$.  That is to say, the privacy loss in QIF equals $\mathbb E_{Z\sim Q} \mathbb E_{T\sim M(Z,\cdot)} [ \inf_{d \in D} \mathbb E_{X \sim Q_T} [ \ell (d, X) ] ]$ for a fixed prior $Q$.
This loss can also be derived in our framework by using Assumption~\ref{ass:expectation} (as in persuasive privacy in expectation, see Appendix~\ref{app:renyi}) and changing Assumption~\ref{ass:worstpriordataset} to consider the expected value of the data under the $Q$, rather than a worst-case analysis. From a game perspective this assumes that Sender and Receiver share the same data-prior, and Sender will assess privacy according to this data-prior. 

As proponents of QIF discuss \citep{borealeWorstAveragecasePrivacy2015, borealeIntroductionQuantitativeInformation2013}, this averaging makes QIF potentially inadequate for assessing information leakage. One approach to rectify this would be to average first over the conditional distribution of $T$ given $x$, and then consider the worst-case over $x$---this is exactly the approach taken in our formulation of persuasive privacy in expectation (see Appendix~\ref{app:renyi}). In comparison, the approach taken by persuasive privacy is to look at the tail probabilities of the (unaveraged) relative privacy loss. We view this as more robust than QIF's approach to privacy. It provides stronger guarantees, because we ensure worst-case relative privacy loss is bounded (while allowing for the possibility of a small $\delta$ failure probability). It should be noted, however, that our framework and QIF likewise allow for robustness in two other ways, by requiring that a privacy guarantee holds over both (i) multiple different data priors; and (ii) multiple different privacy loss functions.

\subsection{Differential Privacy}

Like QIF and persuasive privacy, differential privacy (DP) is a family of technical definitions that measure the privacy of a data release mechanism. DP definitions are unified by their formulation of privacy as the rate of change---i.e., the `derivative'---of the mechanism (echoing the epithet `differential'). While they differ on how they define this derivative, they all follow the same basic idea: measure the change in the (distribution of) the mechanism's output due to counterfactual changes in the mechanism's input \citep{bailiePrivacyDifferentialsDifferential2026, bailieRefreshmentStirredNot2024c}. This differs from the present paper's view of privacy, which does not consider pairs of counterfactual input datasets $(x, x')$ but takes the worst-case privacy loss over all possible $x$. Yet, we still recover probabilistic $(\epsilon, \delta)$-DP (for which pure $\epsilon$-DP is a special case) through the negative log-probability score and two-component discrete data-priors (Proposition~\ref{prop:pdp2pp}). 

Moreover, Appendix~\ref{app:renyi} modifies a key assumption of persuasive privacy, leading to a new framework---persuasive privacy in expectation---which evaluates privacy in terms of expectations rather than tail probabilities. Under this framework, we recover R\'enyi DP and $f$-divergence privacy (Propositions~\ref{prop:rdp2ppe1}--\ref{prop:fdp2ppe}). Therefore, our work subsumes many of the common flavours of DP.\footnote{Note that our definitions are all agnostic to how neighbouring datasets are defined; that is to say, our definitions cover probabilistic DP, pure DP, R\'enyi DP and $f$-divergence privacy regardless of the choice of neighbours.} 

We compare persuasive privacy with another variant of DP---pufferfish privacy, which, like persuasive privacy, is Bayesian---in the following section (Appendix~\ref{app:Pufferfish}). We expect that there are further connections with other flavours of DP. For example, relating capacity-bounded DP \citep{chaudhuriCapacityBoundedDifferential2019} to persuasive privacy may be possible, since it is a generalisation of R\'enyi DP.
Besides $(\epsilon, \delta)$-DP and R\'enyi DP, there are other common flavours of DP---in particular, Gaussian DP, $f$-DP more generally \citep{dongGaussianDifferentialPrivacy2022}, and zero-concentrated DP \citep{bunConcentratedDifferentialPrivacy2016}.
Deriving equivalent persuasive privacy definitions to these DP variants will provide insights into what privacy protections these flavours actually provide. However, we leave these investigations to future work. 

Nevertheless, the recovery of R\'{e}nyi DP in our framework demonstrates the relation between $(1,\epsilon)$-RDP and $\epsilon$-DP with our semantics-first approach. Both definitions can be recovered with our game-theoretic setup under almost identical settings. In particular, the choice of Assumption~\ref{ass:tailprob} or Assumption~\ref{ass:expectation} separates the definitions; DP assesses privacy with a tail probability whereas RDP assesses privacy in expectation. In contrast, \citet{mironov2017renyi} showed $\epsilon$-DP can be recovered as $\infty$-RDP, the limiting case as $\alpha\rightarrow\infty$.

\subsection{Pufferfish Privacy and Its Variants, Including Noiseless Privacy}\label{app:Pufferfish}

Pufferfish privacy is a type of DP which incorporates attackers' uncertainty in the sensitive dataset $x$ \citep{kiferPufferfishFrameworkMathematical2014}. It models this uncertainty using a Bayesian formalism by placing priors on $x$. With these priors, pufferfish privacy considers a different type of ``data release mechanism'' $\Mext$ that is constructed from any standard mechanism $M$: Instead of taking as input the sensitive dataset $x$, $\Mext$ takes as input a data-prior $Q$, the dataset $x$ is then generated according to $Q$ and, lastly, $x$ is fed into $M$ to produce the output. That is to say, $\Mext(Q)$ outputs $T$ drawn from the prior predictive distribution $\mathbb P_Q(T \in \cdot \ ) := \int_{\mathsf X} \mathbb P_{x}(T \in \cdot \ ) Q(\mathsf d x)$.

Intuitively speaking, pufferfish privacy is the requirement that the derivative of $M_{\mathrm{Ext}}$ is bounded, in just the same way that pure DP is the requirement that the derivative of $M$ is bounded \citep{bailiePrivacyDifferentialsDifferential2026, bailieGeneralInferentialLimits2024}. More exactly, pufferfish privacy constructs a metric $d_{\mathcal P}$ on the space $\mathcal P$ of probability distributions on $(\xset, \mathcal X)$ in the following way: Given a set of data-priors $\mathcal Q$ and a set of competing conjectures $\mathbb S \subset \mathcal X \times \mathcal X$, let $G$ be the graph on $\mathcal P$ which has edges between the distributions $Q(X \in \cdot \mid X \in E)$ and $Q(X \in \cdot \mid X \in E')$ for all $Q \in \mathcal Q$ and all $(E, E') \in \mathbb S$. Define $d_{\mathcal P}(P, Q)$ as the length of the shortest path between $P$ and $Q$ on $G$. We say that a mechanism $M$ satisfies $\epsilon$-pufferfish if $\dMult[ \Mext(P), \Mext(Q)] \le \epsilon d_{\mathcal P}(P,Q)$, where $\dMult[ \Mext(P), \Mext(Q)] = \sup_{A \in \mathcal T} \left| \log \frac{\mathbb P_P(T \in A)}{\mathbb P_Q(T \in A)} \right|$ is the multiplicative distance between the distributions of $\Mext(P)$ and $\Mext(Q)$.\footnote{This formulation of pufferfish privacy is due to \citet{bailieGeneralInferentialLimits2024}, 
which also proves the equivalence between this formulation and the original formulation given in
\citet{kiferPufferfishFrameworkMathematical2014}.} 

There are multiple variants of pufferfish privacy \citep[see][and references therein]{bailiePrivacyDifferentialsDifferential2026}, including multiple notions of ``Bayesian'' DP \citep{yangBayesianDifferentialPrivacy2015, triastcynBayesianDifferentialPrivacy2020, aliakbarpourEnhancingFeatureSpecificData2025}. Some of these variants replace $\dMult$ with other distances \citep[or, more generally, premetrics---see][]{bailiePrivacyDifferentialsDifferential2026}. Others generalise the definition of $d_{\mathcal P}$, or consider specific instantiations of $d_{\mathcal P}$. For example, profile-based differential privacy \citep[PBDP,][]{geumlekProfilebasedPrivacyLocally2019} considers an arbitrary neighbouring graph $G$ on $\mathcal P$ in place of Pufferfish's graph (which has a specific structure based on the competing conjectures $\mathbb S$ and the given data-priors $\mathcal Q$ under consideration). Hence, PBDP is a straightforward generalisation of pufferfish privacy.

Because of their Bayesian modelling of $x$, pufferfish privacy and its variants have parallels with the Receiver's view of $x$ in persuasive privacy. Yet they are fundamentally different in how their use of Bayesian modelling relates to their conceptualisation of privacy. Pufferfish and its variants are conditions on the extended mechanism $\Mext$. They view privacy as stability of $\Mext$: an unstable $\Mext$ (i.e., a $\Mext$ with a large derivative) has low privacy, while a stable $\Mext$ (i.e., a $\Mext$ with small derivative) has high privacy. Pufferfish privacy uses its data-priors to modify the stability of the data release mechanism. For example, data-priors that have dependence between records will typically decrease stability and data-priors that are highly uncertain will increase stability. In contrast, the data-priors in persuasive privacy are simply ways to measure ``information gain'' or ``privacy loss''. This is because the data-priors are only used by persuasive privacy to calculate the prior and posterior privacy scores. Persuasive privacy is a semantics-first approach as its guarantees are directly in terms of Sender's loss due to Receiver's action. In contrast, pufferfish privacy and its variants enforce indistinguishability between neighbouring data-priors---a condition which does not immediately translate to how Sender can be harmed by the data release.

Persuasive and pufferfish privacy are not immediately reconcilable as the persuasive privacy considers the privacy loss for each $x$ (see Assumption~\ref{ass:worstpriordataset}), while pufferfish privacy and its variants marginalise over $x$.
However, modifications of Assumption~\ref{ass:worstpriordataset} analogous to QIF (see Appendix~\ref{app:QIF})
may resolve this difference. This suggests that pufferfish privacy could be understood from the perspective of an adjusted version of persuasive privacy (along the lines of Proposition~\ref{prop:pdp2pp}).

As alluded to in the introduction, while the vast majority of DP flavours cannot assess the privacy leakage of deterministic mechanisms, there are some that can. Most of these are pufferfish flavours. For example, noiseless DP \citep{bhaskarNoiselessDatabasePrivacy2011} is a specialisation of pufferfish privacy. Even though deterministic mechanisms $M$ are not stable and as such cannot satisfy regular DP definitions, noiseless DP and other pufferfish variants can handle deterministic mechanisms because a data-prior's uncertainty can make $\Mext$ stable even when $M$ is not. 
This is not to say that the output $T$ of $\Mext$ is not very informative about the true value of the sensitive dataset $x$; but simply that the attacker had sufficient uncertainty a-priori to outweigh the informativeness of $T$. In contrast, persuasive privacy's reasoning about deterministic mechanisms is very different: A deterministic mechanism can satisfy a persuasive privacy definition if, intuitively, Receiver does not learn too much, or is not able to cause much additional harm from observing $T$. This excludes mechanisms with outputs that are informative about $x$, since these would result in a large difference between prior and posterior privacy scores. A deterministic mechanism---or any mechanism, for that matter---can only satisfy persuasive privacy if, simply put, its output is not that informative (with respect to the scoring rule $S$). 

\section{Proofs Deferred from the Main Text}\label{app:proofs}

\subsection{Worst-Case Data-Averaged Outcome}\label{proof:datavgworst}
%The proof of Proposition~\ref{prop:datavgworst} follows.
\begin{proof}[Proof of Proposition~\ref{prop:datavgworst}]
Under Assumption~\ref{ass:loss}, where $\ell = \rho$, Sender's average privacy value is $\mathbb{E}_{X \sim P}[ \rho(d_{\rho}^{P},X)]$, satisfying $\mathbb{E}_{X \sim P}[ \rho(d_{\rho}^{P},X)] \leq \mathbb{E}_{X \sim P}[ \rho(d,X)]$ for all $d \in \dset$.
Considering some alternative loss function $\ell \neq \rho$, Receiver's optimal decision is
\begin{equation*}
    d^P_{\ell} \in \arginf_{d \in \dset} \mathbb{E}_{X \sim P}[ \ell(d,X)] \subset \dset,
\end{equation*}
and the optimal set is assumed to be non-empty.
Therefore, 
$\mathbb{E}_{X \sim P}[ \rho(d_{\rho}^{P},X)] \leq \mathbb{E}_{X \sim P}[ \rho(d^P_\ell,X)]$ as $d^P_\ell \in \dset$,
indicating that Receiver having loss function $\ell = \rho$ is the worst-case data-averaged privacy value for Sender.
\end{proof}

\subsection{Composition Rule}\label{proof:composition}
%The proof of Proposition~\ref{prop:composition} follows.
\begin{proof}[Proof of Proposition~\ref{prop:composition}]
    We denote Receiver's data-posterior after sequential updating with $T_1$ then $T_2$ by
    \begin{equation*}
        Q_{T_1}=Q(\cdotmid M_1,T_1), ~\text{and}\ ~Q_{T_1,T_2} = Q(\cdotmid M_1\otimes M_2, (T_1,T_2)),
    \end{equation*}
    respectively. 
    Since $\mathcal{Q}_x$ is conjugate to $M_1$ we can state that $Q_{T_1} \in \mathcal{Q}_x$ for $Q \in \mathcal{Q}_x$ almost surely. Then, since 
    $M_2$ is $(\mathcal{S},\mathcal{Q}_x,\kappa_2,\delta_2)$-PP for all $T_1\in\mathsf{T}_1$, we have that
    \begin{equation}\label{eq:secondupdateconditionalprivacy}
        \PrMx\left[\Delta_S(Q_{T_1},T_2,x)\leq \kappa_2 \mid \stat_1 \right] = \PrMx\left[S(Q_{T_1},x) - S(Q_{T_1,T_2},x) \leq \kappa_2 \mid \stat_1 \right] \geq 1 - \delta_2,
    \end{equation}
    for all $Q \in\mathcal{Q}_x$, $x \in \xset$, and $S \in \mathcal{S}$. 
The requisite probability satisfies
\begin{align*}
\PrMx\left[\Delta_S(Q,(T_1,T_2),x) \leq \kappa_1 + \kappa_2 \right]
    &= \PrMx\left[S(Q,x) - S(Q_{T_1,T_2}, x) \leq \kappa_1 + \kappa_2 \right] \\ 
    &= \PrMx\left[S(Q,x) \leq  S(Q_{T_1,T_2}, x) + \kappa_1 + \kappa_2 \right]\\
    & \geq \PrMx\left[S(Q,x) \leq S(Q_{T_1}, x) + \kappa_1 \leq S(Q_{T_1,T_2}, x) + \kappa_1 + \kappa_2 \right]\\
    & \geq \PrMx\left[S(Q,x) \leq S(Q_{T_1}, x) + \kappa_1 \right] + \PrMx\left[S(Q_{T_1}, x) \leq S(Q_{T_1,T_2}, x) + \kappa_2 \right] - 1,
    \end{align*}
    using a Boole-Fr\'{e}chet inequality for the last step. Simplifying, using \eqref{eq:secondupdateconditionalprivacy} under expectation of $T_1 \sim M_1(x,\cdot)$, and by assumption of privacy for $M_1$, $\PrMx\left[\Delta_S(Q, T_1, x) \leq \kappa_1 \right] \geq 1 - \delta_1$, we can find that 
    \begin{align*}
    \PrMx\left[\Delta_S(Q,(T_1,T_2),x) \leq \kappa_1 + \kappa_2 \right]
     & \geq \PrMx\left[\Delta_S(Q, T_1, x) \leq \kappa_1 \right] + \mathbb{E}_{T_1\sim M_1(x,\cdot)}\PrMx\left[\Delta_S(Q_{T_1},T_2, x) \leq  \kappa_2 \mid T_1\right] - 1 \\
    &\geq 1 - (\delta_1 + \delta_2),
\end{align*}
for all $Q \in\mathcal{Q}_x$, $x \in \xset$, and $S \in \mathcal{S}$.
\end{proof}

\subsection{Receiver Post-Processing}\label{proof:recpost}
%The proof of Proposition~\ref{prop:recpost} follows.
\begin{proof}[Proof of Proposition~\ref{prop:recpost}]
Receiver's data-posterior after observing $(T_1,T_2)\sim M \otimes K$ is $Q_{T_1,T_2}$. Yet, since $K$ is independent of the data, no new information about $x$ has been incorporated, hence it is clear that 
    \begin{equation*}
        Q_{T_1,T_2} = Q(\cdotmid M \otimes K, (T_1,T_2))= Q(\cdotmid M, T_1) = Q_{T_1},
    \end{equation*}
in this case. Therefore, 
\begin{align*}
\PrMx\left[\Delta_S(Q,(T_1,T_2),x) \leq \kappa \right] & = \PrMx\left[S(Q,x) - S(Q_{T_1,T_2}, x) \leq \kappa \right]\\
& = \PrMx\left[S(Q,x) - S(Q_{T_1}, x) \leq \kappa \right]\\
& = \PrMx\left[\Delta_S(Q,T_1,x) \leq \kappa \right] \\
& \geq 1 - \delta,
\end{align*}
for all $Q \in\mathcal{Q}_x$, $x \in \xset$, and $S \in \mathcal{S}$ by the privacy assumption for $M$.
\end{proof}
\subsection{Sender Post-Processing}\label{proof:sendpost}
%The proof of Proposition~\ref{prop:sendpost} follows. It is established with a counter example.
\begin{proof}[Proof of Proposition~\ref{prop:sendpost} by counterexample]
    Any probabilistic differential privacy (PDP) guarantee is an instance of a persuasive privacy guarantee by Proposition~\ref{prop:pdp2pp}.
    Further, a PDP guarantee does not satisfy sender post-processing \citep[i.e., the post-processing inequality,][]{kifer2012axiomatic,meiser2018approximate}.
\end{proof}

\subsection{Probabilistic Differential Privacy}\label{proof:pdp2pp}
%The proof of Proposition~\ref{prop:pdp2pp} follows.
\begin{proof}[Proof of Proposition~\ref{prop:pdp2pp}]
For all $Q \in \mathcal{H}$ we define $L$ as in Definition~\ref{def:logprobdiscrete} (Appendix~\ref{app:discretelogprob}) and use the following convention. 
For $\Delta_L(Q,T,x) = L(Q,x) - L(Q_T,x)$, when  $L(Q,x) = L(Q_T,x) = \infty$, we take $\Delta_L(Q,T,x) = \infty - \infty=0$.
 
Let $Q \in \mathcal P_2$ and $M$ be a mechanism. Consider $\mu$ (depending on $Q$) a dominating measure for both the distribution of $M(x,\cdot)$ and  $M(x^\prime,\cdot)$ and denote by $m(x,\cdot)$ and $m(x^\prime,\cdot)$ their corresponding conditional densities respectively. We have $Q(\{x\} \mid T) = 0$ if $Q(\{x\})=0$ and otherwise  
\begin{equation*}
    Q(\{x\}\mid T) = \frac{m(x,T)w}{m(x,T)w + m(x^\prime, T)(1-w)},
\end{equation*}
for some $w \in (0,1)$ where $x, x^\prime$ are the supporting points of $Q$. Then 
$M$ satisfies $(L,\mathcal{H},\epsilon,\delta)$-PP if and only if 
\begin{equation}\label{eq:dpprob}
    \inf_{x \in \xset}\inf_{Q \in \mathcal H}\PrMx\left[-\log Q(\{x\}) +\log Q(\{x\}\mid T) \leq \epsilon \right] \geq 1 - \delta.
\end{equation}
Since if $Q(\{x\})=0 $, then $-\log Q(\{x\}) +\log Q(\{x\}\mid T)=0$ by convention, for a given $x \in \xset$ we can restrict consideration to $Q(\{x\})>0$, which implies that \eqref{eq:dpprob} is equivalent to 
\begin{equation}\label{eq:dpprob2}
     \inf_{x \sim x^\prime}\inf_{w \in (0,1)}\PrMx\left[\log \left(  \frac{m(x,T)}{m(x,T)w + m(x^\prime,T)(1-w)} \right)\leq \epsilon \right] \geq 1 - \delta.
\end{equation}
As $\log \left( \frac{m(x,T)}{m(x,T)w + m(x^\prime,T)(1-w)} \right)$ is convex in $w$, we can express the LHS of~\eqref{eq:dpprob2} as 
\begin{align*}
    \inf_{x \sim x^\prime}\inf_{w \in (0,1)}\PrMx\left[\log \left(  \frac{m(x,T)}{m(x,T)w + m(x^\prime,T)(1-w)} \right)\leq \epsilon \right] &= \inf_{x \sim x^\prime}\PrMx\left[ \max \left\{0,\log \frac{m(x,T)}{m(x^\prime,T)}\right\}\leq \epsilon \right] \\
    &=\inf_{x \sim x^\prime} \PrMx\left[\log \frac{m(x,T)}{m(x^\prime,T)}\leq \epsilon \right],
\end{align*}
where the second equality follows from $\varepsilon \ge 0$. 
Therefore \eqref{eq:dpprob} is equivalent to
\begin{equation*}
         \inf_{x \sim x^\prime}\PrMx\left[m(x,T) \leq \exp\{\epsilon\} m(x^\prime, T) \right] \geq 1 - \delta,
    \end{equation*}
or $(\epsilon,\delta)$-PDP. The converse holds analogously.
\end{proof}

\subsection{Private Empirical Average}\label{proof:privavg}
%The proof of Proposition~\ref{prop:privavg} follows.
\begin{proof}[Proof of Proposition~\ref{prop:privavg}]
    Let $\mu(P)$ be the mean of a distribution $P$, $\Sigma(P)$ be the covariance, $\mu_i(P)$ and $\sigma_i^2(P)$ be the $i$th marginal mean and variance respectively. For convenience, denote $\mu = \mu(Q)$, $\Sigma = \Sigma(Q)$,  $\mu_i = \mu_i(Q)$, and $\sigma_i^2 = \sigma_i^2(Q)$ for the data-prior $Q$. Let $\sigma = [\sigma_1\cdots\sigma_n]^\top$, the column vector of marginal variances from the data-prior.
    
    For $n\geq 2$, if $Q \in \mathcal{G}_{x}^{r}$, the release of $\bar{x}$  by Sender yields a data-posterior $Q_{\bar{x}}$ that is a (degenerate) Gaussian distribution with support on the subspace $\{z \in \mathbb{R}^n:\bar{z}=\bar{x}\}$ with mean and variance
    \begin{equation*}
        \mu(Q_{\bar{x}})=\mu + \frac{\Sigma u}{u^\top \Sigma u}(\bar{x} - \bar{\mu}), \quad \Sigma(Q_{\bar{x}}) = \Sigma - \frac{\Sigma u u^\top \Sigma}{u^\top \Sigma u},
    \end{equation*}
    where $u$ is a vector of length $n$ such that $u = \frac{1}{n}[1\cdots1]^\top$. If we parametrise the prior variance by $\Sigma_{ij} = \rho_{ij}\sigma_i\sigma_j$ for $\rho_{ii}=1$ and $\vert\rho_{ij}\vert<1$, then the marginal data-posterior mean and variance can be expressed as
    \begin{equation*}
            \mu_i(Q_{\bar{x}})=\mu_i + \sigma_i\frac{v_i}{v}(\bar{x} - \bar{\mu}), \quad \sigma_i^2(Q_{\bar{x}}) = \sigma_i^2\left(1 - \frac{v_i^2}{v}\right),
    \end{equation*}
    respectively, where $v_i = \frac{1}{n}\sum_{j=1}^n \rho_{ij}\sigma_j$ and $v = \frac{1}{n}\sum_{i=1}^{n}\sigma_i v_i$.

    Before continuing, we establish some inequalities involving $v_i$ and $v$. If $\Phi$ is the correlation matrix $[\Phi]_{ij} = \rho_{ij}$, then the $v_i^2$ and $v$ terms can be written as
    \begin{equation*}
        v_i^2 = \frac{1}{n^2}(e_i^\top\Phi \sigma)^2, \quad v = \frac{1}{n^2} \sigma^\top \Phi \sigma,
    \end{equation*}
    where $e_i$ is the standard unit vector in the $i$th direction. First note that $n^2(v-v_i^2) = \gamma_i^\top\Phi\gamma_i \geq 0$ where $\gamma_i = \sigma - (e_i^\top\Phi \sigma) e_i$, and hence
    \begin{equation}\label{eq:vvi}
        v \geq v_i^2.
    \end{equation}
    Secondly, let $\lambda_1 \geq 1 \geq \lambda_n \geq 0$ be the largest and smallest eigenvalues of $\Phi$ respectively,\footnote{$\lambda_1 \geq 1$ by the Schur-Horn Theorem.} then
    \begin{equation*}
        n^2(v-v_i^2) = \gamma_i^\top\Phi\gamma_i \geq \lambda_n \Vert\gamma_i \Vert_2^2 = \lambda_n \left[\sum_{j\neq i} \sigma_j^2 + (\sigma_i - e_i^\top\Phi\sigma)^2 \right] \geq \lambda_n \sum_{j\neq i} \sigma_j^2.
    \end{equation*}
    Using this when $\lambda_n > 0$ (by assumption of positive definiteness of $\Sigma$), we can state that 
    \begin{equation}\label{eq:loginequality}
        1-\frac{v_i^2}{v} = \frac{n^2(v-v_i^2)}{n^2 v} = \frac{\gamma_i^\top\Phi\gamma_i}{\sigma^\top\Phi\sigma} \geq  \frac{\lambda_n}{\lambda_1}\frac{\sum_{j\neq i} \sigma_j^2 }{\Vert\sigma \Vert_2^2} \geq \frac{\lambda_n}{\lambda_1}\left(1- \frac{\sigma_i^2}{\Vert\sigma \Vert_2^2} \right).
    \end{equation}
    
    Considering this data-posterior with the marginal DSS, the $i$th relative privacy score is
    \begin{equation*}
        \Delta_i(Q,\bar{x},x) = \frac{(\mu_i - x_i)^2}{\sigma_i^2} - \frac{(\mu_i(Q_{\bar{x}}) - x_i)^2}{\sigma_i^2(Q_{\bar{x}})} -\log\left(1 - \frac{v_i^2}{v}\right).
    \end{equation*}
    This can be expressed as 
    \begin{align*}
        \Delta_i(Q,\bar{x},x) &= \frac{\left(1 - \frac{v_i^2}{v}\right)(\mu_i - x_i)^2 -(\sigma_i\frac{v_i}{v}(\bar{x} - \bar{\mu}) + \mu_i - x_i)^2}{\left(1 - \frac{v_i^2}{v}\right)\sigma_i^2} - \log\left(1 - \frac{v_i^2}{v}\right) \\
        &= \frac{-\frac{v_i^2}{v}(\mu_i - x_i)^2 - \sigma_i^2\frac{v_i^2}{v^2} (\bar{x} - \bar{\mu})^2 - \frac{2 v_i \sigma_i}{v} (\mu_i - x_i)(\bar{x} - \bar{\mu})}{\left(1 - \frac{v_i^2}{v}\right)\sigma_i^2} - \log\left(1 - \frac{v_i^2}{v}\right) \\
        &= \frac{-v_i^2(\mu_i - x_i)^2 - \sigma_i^2\frac{v_i^2}{v} (\bar{x} - \bar{\mu})^2 - 2 v_i \sigma_i (\mu_i - x_i)(\bar{x} - \bar{\mu})}{\left(v - v_i^2\right)\sigma_i^2} - \log\left(1 - \frac{v_i^2}{v}\right) \\
        &= \frac{-\left[ v_i(\mu_i - x_i) + \sigma_i(\bar{x} - \bar{\mu}) \right]^2 + \sigma_i^2(1-\frac{v_i^2}{v}) (\bar{x} - \bar{\mu})^2}{\left(v - v_i^2\right)\sigma_i^2} - \log\left(1 - \frac{v_i^2}{v}\right) \\
        & \leq \frac{(\bar{x} - \bar{\mu})^2}{v} - \log\left(1 - \frac{v_i^2}{v}\right) \\
        & \leq \frac{(\bar{x} - \bar{\mu})^2}{v} - \log \frac{\lambda_n}{\lambda_1}\left(1- \frac{\sigma_i^2}{\Vert\sigma \Vert_2^2} \right),
    \end{align*}
    noting $v-v_i^2 \geq 0$ as established by \eqref{eq:vvi} and using \eqref{eq:loginequality}. Since $v = \overline{\Sigma}$ and $c_\Phi = \lambda_1/\lambda_n$, we find
    \begin{align*}
        \Delta_i(Q,\bar{x},x) 
        & \leq r_1 + \log r_2,
    \end{align*}
    as $\frac{(\bar{x} - \bar{\mu})^2}{\overline{\Sigma}} \leq r_1$ and $c_\Phi\left(1- \frac{\sigma_i^2}{\Vert\sigma \Vert_2^2} \right)^{-1} \leq r_2$ by assumption on the class of data-priors.
\end{proof}

\subsection{Private Cell Suppression} 
\begin{proof}[Proof of Proposition~\ref{prop:privcells}]
After observing the output $T\sim \prod_{i=1}^n M_i^h(x_i,\cdot)$ from the cell-suppression mechanism, Receiver's $i$th marginal data-posterior probability mass function (PMF) $q_i( \cdot \mid T)$, evaluated at the data $x_i$, satisfies 
    \begin{equation*}
        q_i(x_i\mid T) = 
        \begin{cases}
            \frac{q_i(x_i)}{F_{i}^{h}} &\text{if}~x_i<h,\\
            1 & x_i \geq h,
        \end{cases}
    \end{equation*}
     where $q_i(\cdot)$ is the $i$th marginal data-prior probability mass function, and the normalising term $F_{i}^{h} = Q_i(\{z\in \mathbb{N}:z < h\})$ is simply Receiver's prior probability that the $i$th cell has a value less than $h$.
    The $i$th relative privacy score is
    \begin{equation*}
        \Delta_i(Q,T,x) =  L_i^r(Q,x) - L_i^r(Q_T,x) = 
        \begin{cases}
     -\log F_{i}^{h} &\text{if}~ x_i < \min\{ r,h\}, \\
     -\log q_i(x_i) &\text{if}~ h \leq x_i < r, \\
     0 &\text{if}~ x_i \geq r,
    \end{cases}
    \end{equation*}
   almost surely. Taking $h \geq r$, we can say 
   \begin{equation*}
   \PrMx[\Delta_i(Q,T,x) \leq \kappa] =
    \begin{cases}
     1[F_{i}^{h} \geq e^{-\kappa}] &\text{if}~ x_i < r, \\
     1 &\text{if}~ x_i \geq r.
    \end{cases}
   \end{equation*}
   Hence, if $\kappa = -\log \alpha$, privacy is retained in all cases with $Q_i \in \mathcal{C}^{h,\alpha}$.
\end{proof}

%%%%%%%%%%%%%%%%%%%%%%%%%%%%%%%%%%%%%%%%%%%%%%%%%%%%%%%%%%%%%%%%%%%%%%%%%%%%%%%
%%%%%%%%%%%%%%%%%%%%%%%%%%%%%%%%%%%%%%%%%%%%%%%%%%%%%%%%%%%%%%%%%%%%%%%%%%%%%%%

\end{document}